\documentclass[letterpaper,11pt]{article}

\usepackage{ucs}
\usepackage[utf8x]{inputenc}
\usepackage{graphicx}
\usepackage{amsfonts}
\usepackage{dsfont}
\usepackage{amssymb}
\usepackage{amsmath}
\usepackage{amsthm}
\usepackage{enumerate}
\usepackage{stmaryrd}
\usepackage{fullpage}
\usepackage{ifthen}
\usepackage{subfigure}
\usepackage{epic}
\usepackage{authblk}
\usepackage{textcomp}
\usepackage[small]{caption}
\usepackage{bm}

\usepackage[numbers,comma,square,sort&compress]{natbib}
\usepackage[letterpaper,text={7in,9in},centering]{geometry}

\usepackage{color}
\usepackage{titlesec}
\setlength{\parindent}{0.0in}
\setlength{\parskip}{1.0ex plus0.2ex minus0.2ex}

\graphicspath{{eps/}{pdf/}}

\newcommand{\be}{\begin{equation}}
\newcommand{\ee}{\end{equation}}
\newcommand{\bqs}{\begin{equation*}}
\newcommand{\eqs}{\end{equation*}}

\newcommand{\bg}{{\bar{\gamma}}}

\newcommand{\slin}{s_{\mathrm{lin}}}

\renewcommand{\O}{\mathcal{O}}

\newcommand{\F}{\mathcal{F}}

\numberwithin{equation}{section}

  {\begin{trivlist}\item[]{\bf Hypothesis #1 }\em}{\end{trivlist}}

\theoremstyle{plain}
\newtheorem{theorem}{Theorem}[section]

\newtheorem{lemma}[theorem]{Lemma}

\newtheorem{rmk}[theorem]{Remark}

\newcommand{\mbi}{\mathbf{i}}

\newenvironment{Proof}[1][.]%
 {\begin{trivlist}\item[]\textbf{Proof#1 }}%
 {\hspace*{\fill}$\rule{0.3\baselineskip}{0.35\baselineskip}$\end{trivlist}}

\title{Locked fronts in a discrete time discrete space population model}

\author{Matt Holzer\footnote{email: \texttt{mholzer@gmu.edu}}, Zachary Richey\footnote{email: \texttt{zrichey@gmu.edu}}, Wyatt Rush\footnote{email: \texttt{wrush@gmu.edu}}, Samuel Schmidgall\footnote{email: \texttt{sschmidg@gmu.edu}} }
\affil{\small Department of Mathematical Sciences, George Mason University, Fairfax, VA, USA}

\begin{document}
\maketitle

\begin{abstract}

A model of population growth and dispersal is considered where the spatial habitat is a lattice and reproduction occurs generationally.  The resulting discrete dynamical systems exhibits velocity locking, where rational speed invasion fronts are observed to persist as parameters are varied.  In this article, we construct locked fronts for a particular piecewise linear reproduction function.  These fronts are shown to be linear combinations of exponentially decaying solutions to the linear system near the unstable state.  Based upon these front solutions, we then derive expressions for the boundary of locking regions in parameter space.  We obtain leading order expansions for the locking regions in the limit as the migration parameter tends to zero.  Strict spectral stability in exponentially weighted spaces is also established.

\end{abstract}

{\noindent \bf Keywords:} invasion fronts, lattice dynamical system, velocity locking \\

{\noindent \bf MSC numbers:} 37L60, 35C07, 92A15\\

\section{Introduction}
We study a model of population dynamics introduced in \cite{wang19}, where both space and time are discrete quantities.  To envision the model, imagine an infinite chain of islands and a species of bird.  Suppose that this species initially resides on a single island in the chain.  During each generation, both migration and reproduction occur.  First, some proportion of the bird population migrates to neighboring islands while the rest remains.  Second, the population at each island  reproduces independently  according to some reproduction rule.  Repeating this process over many generations, the species spreads out and forms a traveling front.  The speed of this front characterizes how quickly the island chain is populated by the new species, and of interest is how this speed depends on system parameters.  For example, one might imagine that a small increase in the migration rate would lead to a faster invasion speed.  However, as was noted in \cite{wang19}, this is not always the case, and for some reproduction functions and some parameters, the invasion speed can be {\em locked} and remain constant over some subset of parameter space.  This locking phenomena is the primary focus of this article, and our primary goal is to construct locked traveling fronts and determine conditions that prescribe the set of parameters over which these fronts exist.

We now describe the mathematical formulation of the model introduced in \cite{wang19}.  Let $u_{i,t}$ be the population at the $i$-th lattice site during the $t$-th generation.  Following the description above, each generation consists of two steps: migration and reproduction.  First, it is assumed that some proportion $m$ of the population at each lattice site will migrate, with half moving left and the other half moving right.  A reproduction function $g(u)$ then prescribes the population in the next generation as a function of the post-migration population at each island.  Putting these two steps together, we have the following difference equation
\be u_{i,t+1}=g\left( \frac{m}{2}u_{i-1,t}+(1-m)u_{i,t}+\frac{m}{2}u_{i+1,t}\right).\label{eq:main} \ee
A variety of reproduction functions were considered in \cite{wang19}.  Here, we will focus on the most analytically tractable case, namely
\be g(u)=\left\{ \begin{array}{cc} r u & 0\leq u<c \\ 1 & u\geq c \end{array}\right. . \label{eq:g} \ee
We only consider the case where $rc \le 1$, that is, $g(u) \le 1$ for any $u\geq 0$.  The parameter $c$ represents a critical population density.  Below this threshold, the reproduction function is linear with a proportionality constant $r$.  Above this threshold, the reproduction function returns the value of $1$, which is the carrying capacity of the lattice site.  This jump in the reproduction function is characteristic of a weak Allee effect, where the maximal per capita growth rate occurs at intermediate values of the population density.

Invasion speeds determined from direct numerical simulations for two different sets of parameters are shown in Figure~\ref{fig:speedplots}.  These speeds are numerically calculated as the ratio of the number of lattice sites (to the right) that transition to the carrying capacity divided by the number of generations simulated.  When the critical threshold $c$ is large, the invasion is dominated by the linear growth ahead of the front interface, and the selected invasion speed appears to be a smooth monotonically increasing function of the migration rate $m$.  By contrast, for smaller values of $c$, it is observed that velocity locking can occur, where the speed of the front remains fixed over an interval of parameter values.  As is described in \cite{wang19}, this locking is a consequence of the discrete nature of the problem.  Fronts traveling with rational speed are fixed points of a certain map: 
for rational speed $s=\frac{p}{q}$,  this map consists of $q$ fold iteration of (\ref{eq:main}), followed by shifting the solution $p$ lattice sites to the left.  In the case of locking, these fixed points are robust with respect to small changes in parameters, leading to preservation of the front over an interval of parameter values.   The speed plot in the right panel of Figure~\ref{fig:speedplots} resembles a Devil's staircase and suggests an analogy to phase locking; see for example \cite{arnold61}.  Indeed, in parameter space the locking regions resemble resonance tongues; see Figure~\ref{fig:tonguespic}.  

\begin{figure}[!t]
\centering
\includegraphics[width=0.45\textwidth]{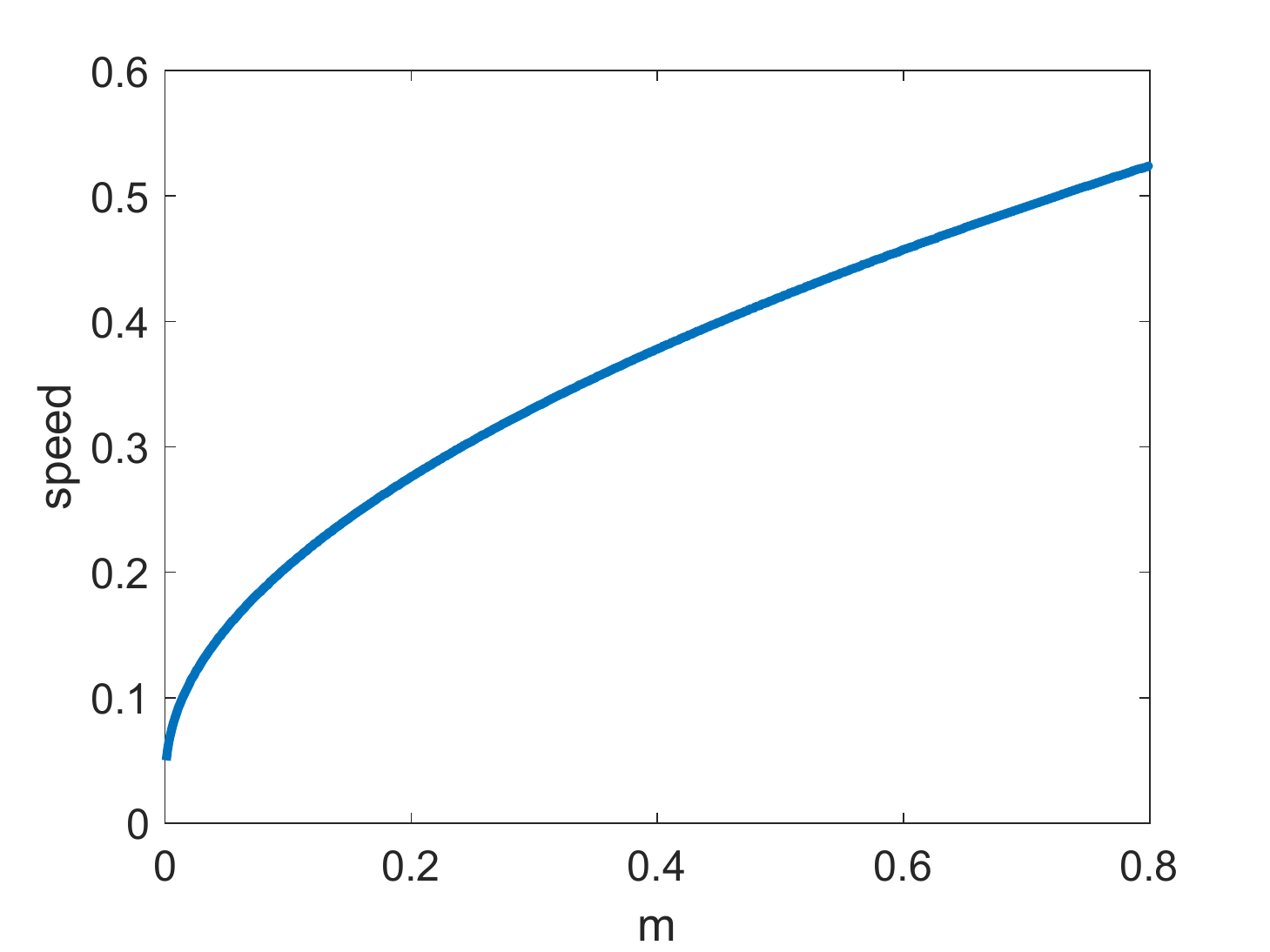}
\includegraphics[width=0.45\textwidth]{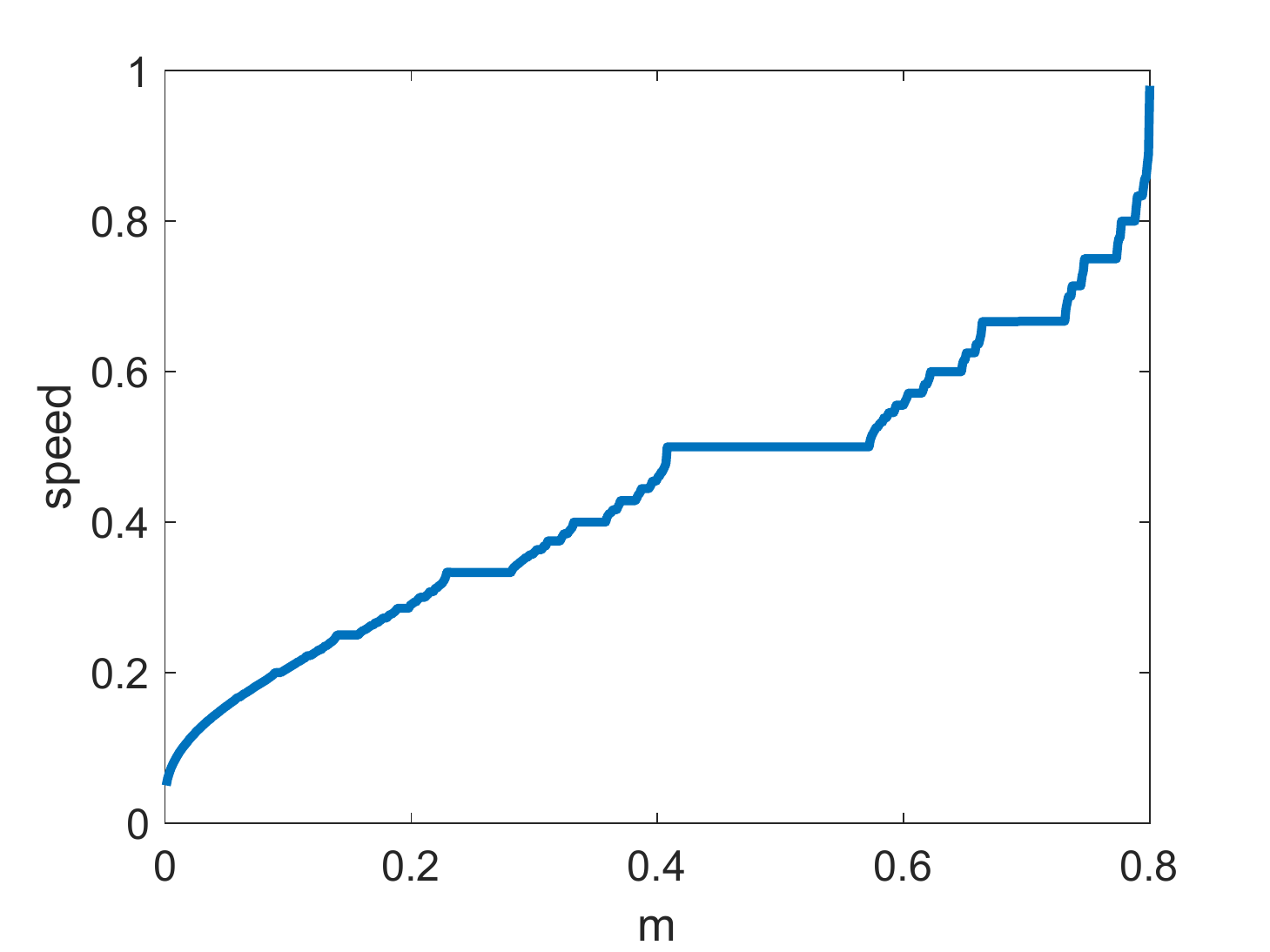}
\caption{Numerically observed invasion speeds for (\ref{eq:main}) as a function of the migration rate $m$ with $r=1.2$ and two different choices of the critical population density $c$.  On the left, the case of $c=0.8$ is depicted, and the invasion speed appears to be a smooth monotonically increasing function of the migration rate.  On the right, the case of $c=0.4$ is depicted for which the invasion speed appears to be constant at certain rational speeds and resembles a Devil's staircase.     }
\label{fig:speedplots}
\end{figure}

\begin{figure}[!t]
\centering
\includegraphics[width=0.45\textwidth]{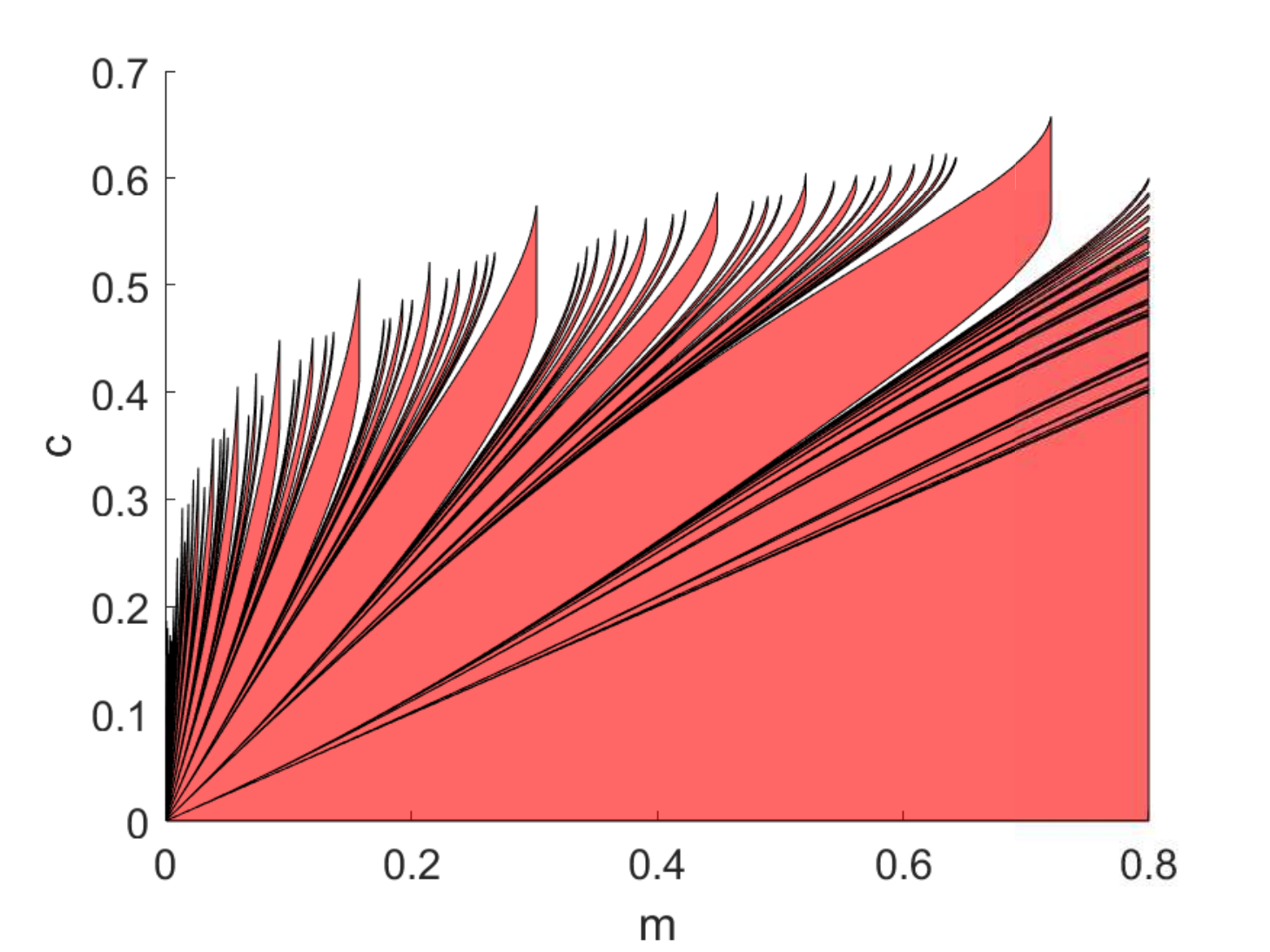}
\includegraphics[width=0.45\textwidth]{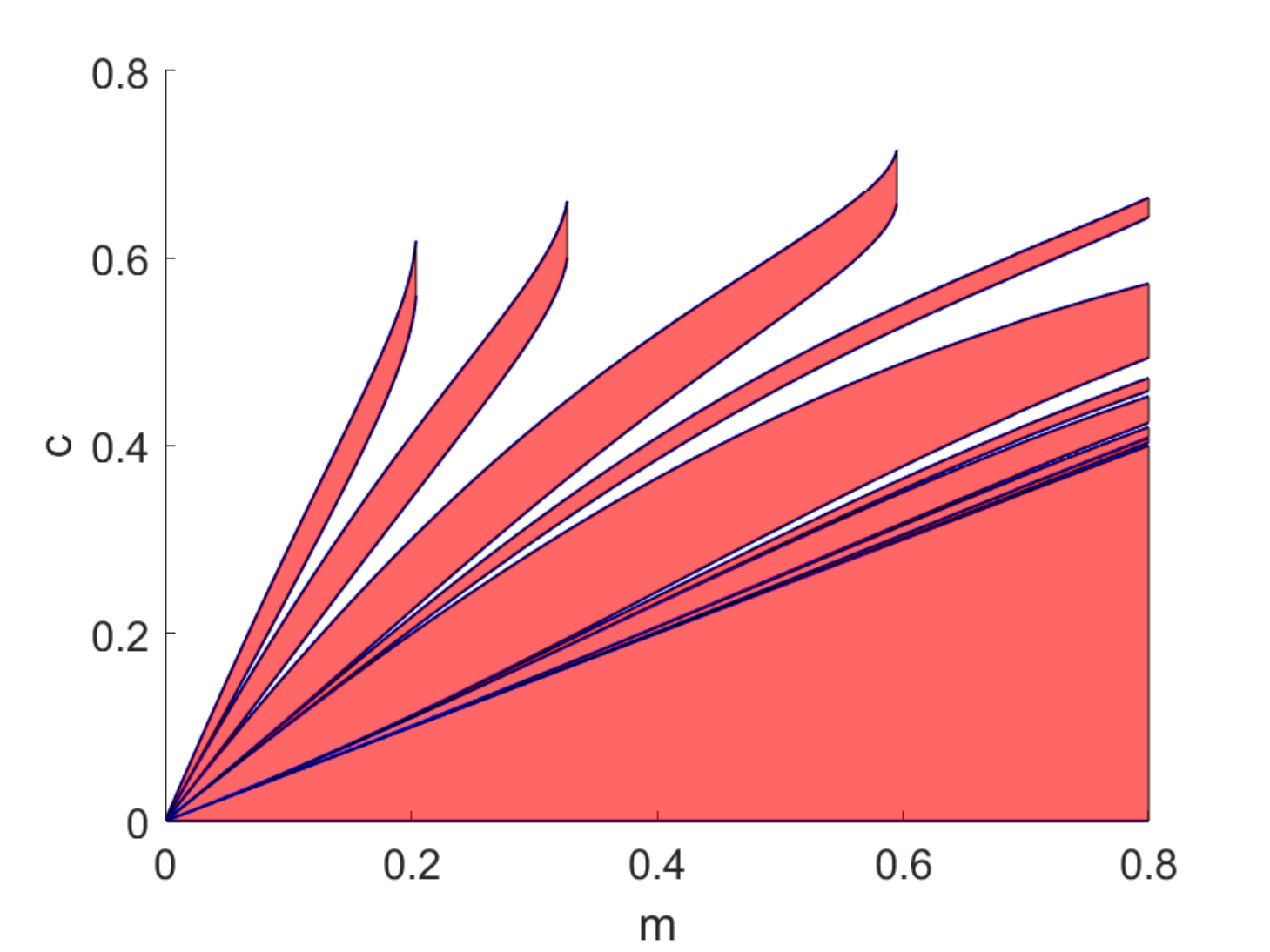}
\caption{Locking regions (shaded) as a subset of $c$-$m$ parameter space with $r=1.2$.  Shown are regions for  all rational speeds $\frac{p}{q}$ with $q\leq 20$ and $1\leq p\leq q$ with $\mathrm{gcd}(p,q)=1$. These regions are obtained via the formulas derived in Section~\ref{sec:lockedfronts}.   }
\label{fig:tonguespic}
\end{figure}

Fronts propagating into unstable states have been studied extensively; see for example \cite{vansaarloos03}.  Most investigations involve PDE models where both space and time are continuous variables.  In this context, invasion fronts can be characterized as {\em pulled} if their speed is equal to the spreading speed of disturbances for the equation linearized near the unstable state, and {\em pushed} if their speed is determined by nonlinear effects.  When space is discrete, the same dichotomy exists, and it is only in the case of both discrete time and space that velocity locking is observed.  In \cite{wang19}, {\em locked} fronts are introduced as a subset of pushed fronts where the rational velocity is constant over some region in parameter space.  

Velocity locking for traveling fronts has also been studied for difference equations known as coupled map lattices, where the fronts connect two stable states; see for example \cite{kaneko86}.  In some cases, the dynamics are shown to be equivalent to a circle map, and an explicit analogy to mode locking is achieved; see for example \cite{carretero00,fernandez97}.  Velocity locking with zero speed is also known as front pinning and has been studied widely in the literature.  In contrast to the velocity locking studied here, front pinning can occur for a variety of systems with both time and space as continuous variables for reaction functions of bistable type.  Pinning was originally studied in the context of one-dimensional lattices; see for example  \cite{bell84,carpio03,fath98,keener87}.  Pinning was later discovered to occur due to heterogeneities; see for example \cite{dirr06,lewis00,xin93},  in nonlocal equations; see for example \cite{anderson16,bates97}, and for problems posed in higher spatial dimensions; see for example \cite{berestycki16,hoffman10,lewis91}

The primary contribution of the current study is to construct locked fronts for (\ref{eq:main}) and derive boundaries of the locking regimes in parameter space.  In general, construction of traveling waves for lattice dynamical systems is challenging.  Take for example a front propagating with rational speed $s=\frac{p}{q}$.  After $q$ generations, the population at any lattice site will depend on the population at $2q+1$ lattice sites in original generation.  This can be re-expressed in the form of a traveling wave equation as a dynamical system in $\mathbb{R}^{2q}$.  Further complicating the matter is that unless $g(u)$ has an analytical inverse, this dynamical system is defined implicitly.   Constructing solutions in such a high dimensional phase space is an extremely challenging problem.  By restricting to the piecewise linear reproduction function in (\ref{eq:g}), this construction becomes tractable by allowing us to piece together linear solutions near zero with the stable state one.  

Our main result is presented in Theorem~\ref{thm:main}.  There we show that for any rational $s=\frac{p}{q}<1$ there exists a nonempty region in $(r,m,c)$ parameter space for which positive fronts of (\ref{eq:main}) propagating with speed $s=\frac{p}{q}$ exist.   These fronts are fixed point of the following map.  Let $G:\ell^\infty(\mathbb{Z})\to \ell^\infty(\mathbb{Z})$ be the generational map defined in (\ref{eq:main}) and let $S:\ell^\infty(\mathbb{Z})\to \ell^\infty(\mathbb{Z})$ be the left shift operator.  Locked fronts are fixed points of the map $\mathcal{F}(u)=S^{(p)}\left(G^{(q)}(u)\right)$.  Our second main result concerns the stability of the locked fronts as a fixed point of $\mathcal{F}$.  In Theorem~\ref{thm:stable}, we study the spectral stability of this solution and show that its spectrum in an appropriate weighted space is strictly contained inside the unit circle.  


The rest of this paper is organized as follows.  In Section~\ref{sec:outline}, we provide a short outline of our approach.  In Section~\ref{sec:prelim}, we derive some preliminary facts about (\ref{eq:main}) linearized near the unstable equilibrium.  In Section~\ref{sec:lockedfronts}, we construct locked fronts propagating with rational speed and state our main Theorem~\ref{thm:main} prescribing the existence of locking regions.  Portions of the proof are presented in Section~\ref{sec:positive} and we derive expansions for the locking regions.  In Section~\ref{sec:stab} we prove that the front is spectrally stable with respect to perturbations in a particular weighted function space; see Theorem~\ref{thm:stable}.  In Section~\ref{sec:numerics}, we compare our predictions to numerical simulations.  Finally, we conclude in Section~\ref{sec:discussion} with a discussion of future directions for study.

\section{Front Construction: Overview}\label{sec:outline} 

Let us  motivate the construction that will follow.  Locked fronts propagating to the right with speed $\frac{p}{q}$ are solutions of (\ref{eq:main}) which return to the same form after $q$ generations but are shifted $p$ lattice sites to the right.  For example, consider the following example of a speed $\frac{2}{5}$ front initially located at lattice site $i=0$ and evolving over five generations:
\[\begin{array}{ccccccc}
\text{Lattice Site} & i=-1 &  i=0 & i=1 & i=2 & i=3 & i=4 \\
\text{Generation 0 } & 1& 1 & \phi_1  & \phi_2 & \phi_3 & \phi_4 \\
\text{Generation 1 } & 1&1 & *  & * & * & * \\
\text{Generation 2 } & 1&1 & * & * & * & * \\
\text{Generation 3 } & 1&1 & 1 & * & * & *  \\
\text{Generation 4 } & 1&1 & 1 & * & *  & * \\
\text{Generation 5 } & 1&1 & 1 & 1  & \phi_1 & \phi_2  
\end{array} \]
Our goal is to compute the $\phi_i$ that describe the front as well as the front profile during intermediate generations, marked in the table with asterisks.  We make several observations that will guide our approach in the coming sections.  We say that a lattice site is \textit{at capacity} if the population is one at that lattice site.  Lattice sites to the left of the front interface are at capacity and remain at capacity. For those lattice sites ahead of the front interface, the update rule is linear.  As a result, we expect that the $\phi_j$ can be written as linear combinations of solutions to the linearized problem.  Finally, for those lattice sites at the front interface, we must match the linearly decaying front ahead of the front interface with those sites at capacity behind the front interface.  Inspecting the form of the front, we see that one condition is generated at each generation for which the front does not advance.  In the example above, this occurs at the first, second, and fourth generations at the first lattice site below capacity.  

This exercise motivates the remainder of the paper as follows.  First, we will study exponentially decaying solutions of the linearized equation and isolate $q-p$ such solutions from which to construct the front.  Then, matching conditions will be derived at the $q-p$ generations at which the front does not advance.  These conditions will be solved to yield formulas for the traveling front solution.  Finally, bounds on the locking region in parameter space are obtained by verifying that the post-migration population density remains above or below the critical population density $c$ at each generation.  

In the process of deriving the front solution, several questions arise that we will address.  For one, it will turn out that most of the linear solutions which form the building blocks of the front will be oscillatory in space.  For the front to be relevant to the model described in (\ref{eq:main}), it must be positive.  We will verify that the linear combination of these (mostly) oscillatory terms is, in fact, positive.  Second, there is also some question as to which $q-p$ linearly decaying solutions to include in the front construction.  Based upon the PDE theory, we will initially proceed by using the $q-p$ with the smallest modulus.  This choice will be substantiated by a spectral analysis of the problem where we will show that the inclusion of any other weaker decaying terms would lead to less desirable stability properties for the front.

\section{Properties of the linearized system}\label{sec:prelim}
In this section, we study of the dynamics for the linearization near the unstable zero state.  The linearized equation is  described by  
\be u_{i,t+1}=r\left( \frac{m}{2}u_{i-1,t}+(1-m)u_{i,t}+\frac{m}{2}u_{i+1,t}\right). \label{eq:linear} \ee
We seek exponentially decaying solutions of the form
\be u_{i,t}=\lambda^t \gamma^i, \label{eq:linearform} \ee
where $\gamma$ is the decay rate in space and $\lambda$ is the associated growth factor. We introduce the shorthand notation
\[ a=\frac{rm}{2}, \quad b=r(1-m), \]
and after plugging (\ref{eq:linearform}) into (\ref{eq:linear}), we obtain the dispersion relation
\[ \lambda(\gamma)=\frac{1}{\gamma}\left(a+b\gamma+a\gamma^2\right), \]
which relates the exponential decay in space of the solution to its temporal growth rate.  The speed associated to each decay rate $\gamma\in\mathbb{R}$ is called its envelope velocity $s_{\mathrm{env}}(\gamma)$ and can be calculated by solving $u_{i,t+q}=u_{i-p,t}$ using (\ref{eq:linearform}), from which we obtain 
\be s_{\mathrm{env}}(\gamma)=-\frac{\log(\lambda(\gamma))}{\log(\gamma)}. \label{eq:senv}\ee
Suppose that we began with initial data for (\ref{eq:linear}) that was localized in space.  Then a comparison argument shows that the  spreading speed of this solution (recall we are dealing with the linearized equation (\ref{eq:linear})) must be less than $s_{\mathrm{env}}(\gamma)$ for any $0<\gamma<1$.  We therefore define the {\em linear spreading speed} as 
\[ s_{\mathrm{lin}}=\min_{0<\gamma<1} s_{\mathrm{env}}(\gamma).\]
Associated to this speed is the  {\em linear decay rate} $\gamma_{\mathrm{lin}}$ which satisfies
\[ s_{\mathrm{env}}(\gamma_{\mathrm{lin}})=s_{\mathrm{lin}} \] 

\begin{figure}[!t]
\centering
\includegraphics[width=0.45\textwidth]{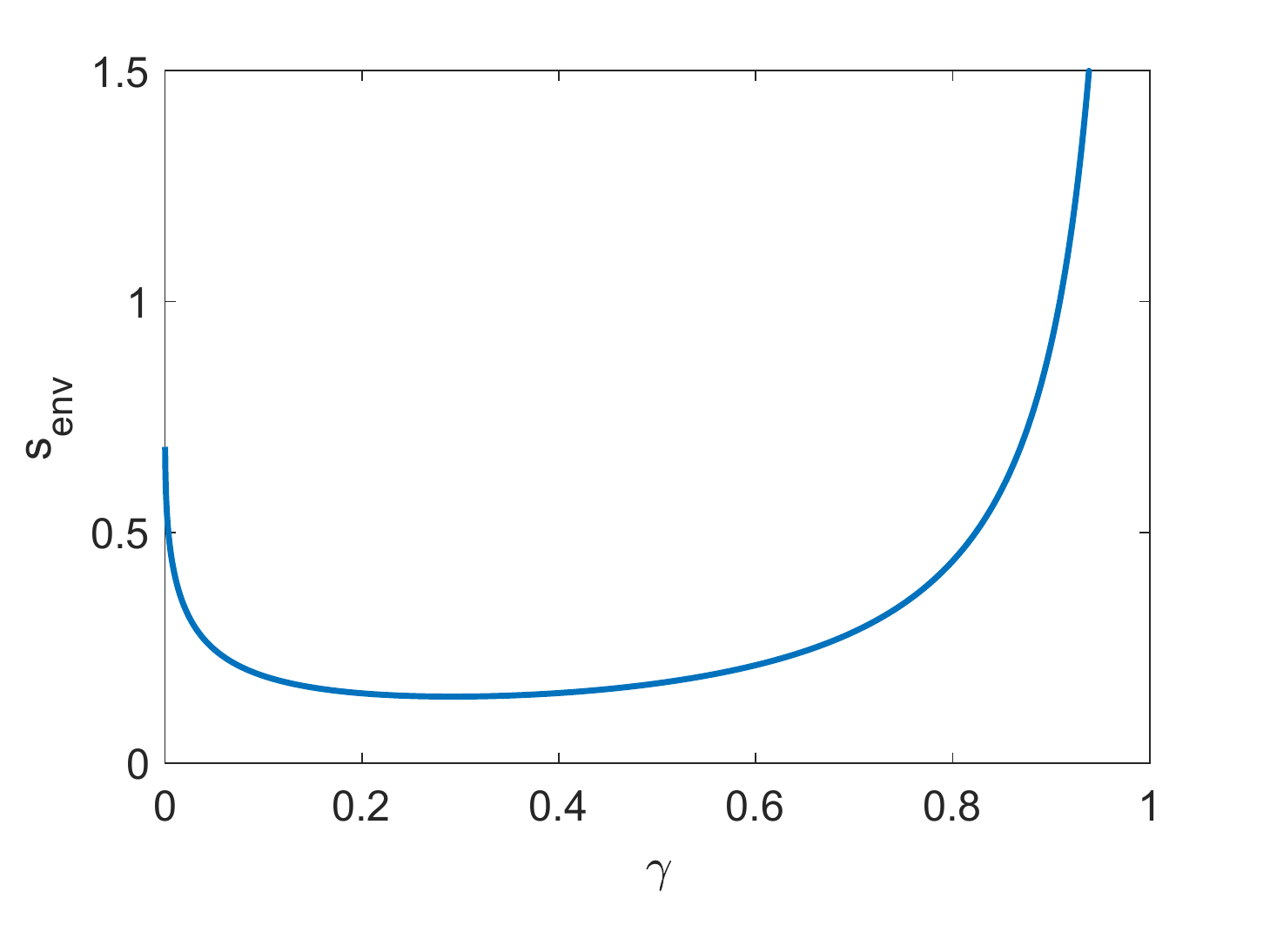}
\includegraphics[width=0.45\textwidth]{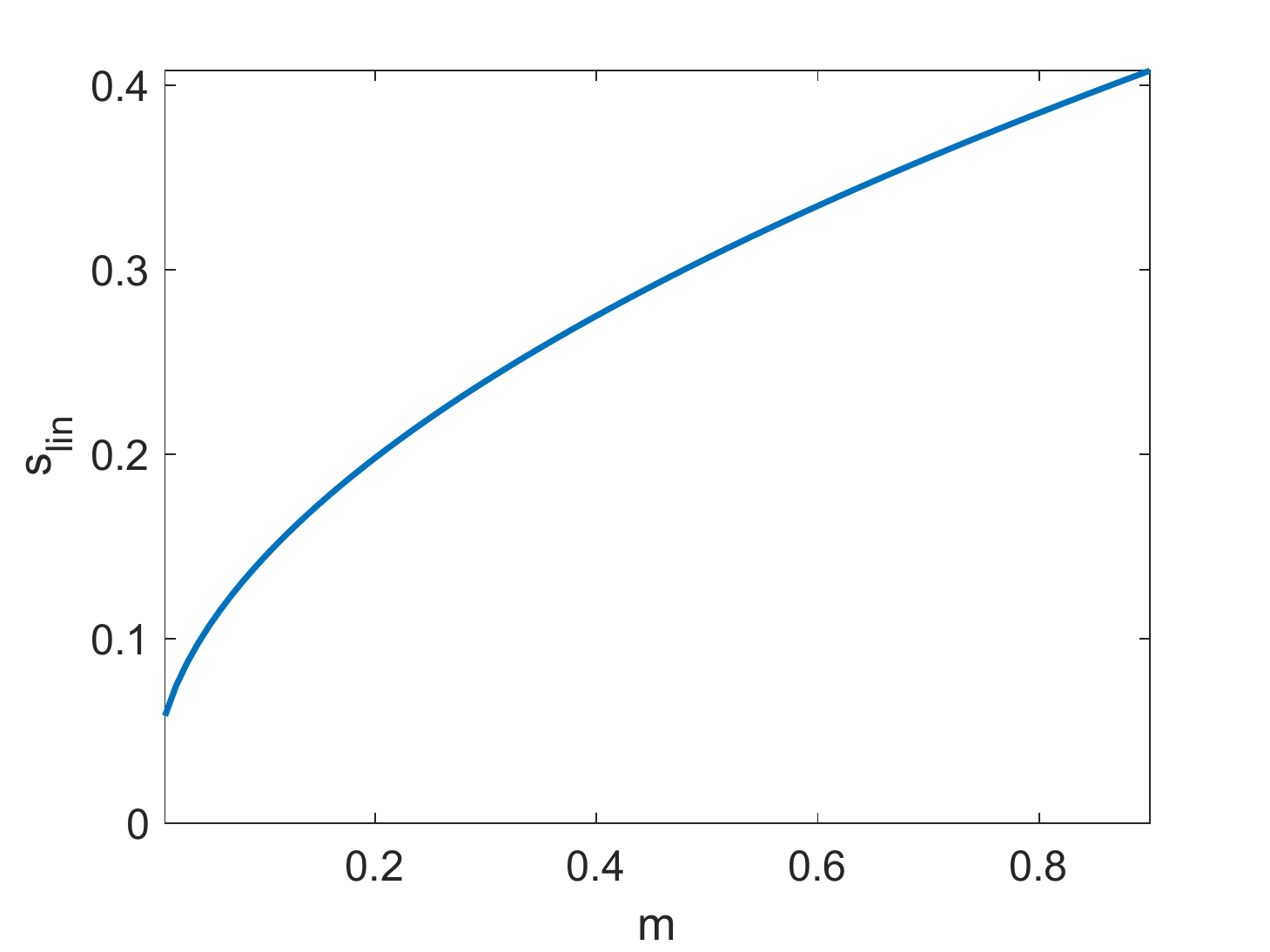}
\caption{On the left is the envelope speed $s_{env}$ as a function of the decay rate $\gamma$ for the parameter values $r=1.1$ and $m=0.1$.  The minimum value corresponds to the linear spreading speed, which for these parameter values is approximately $0.1443$. On the right is the linear spreading speed for $r=1.1$ and varying values of $m$.   }
\label{fig:senv}
\end{figure}
We will collect some facts regarding $s_{\mathrm{env}}(\gamma)$ and $s_{\mathrm{lin}}$.  

\begin{lemma}\label{lem:slin} If $1<r<\frac{2}{m}$, then $s_{\mathrm{env}}(\gamma)$ has a unique minimum, and $\slin$ is well defined with $\slin<1$.  Moreover, for any $1>\frac{p}{q}>s_{lin}$ there exist exactly two decay rates $0<\gamma_s<\gamma_w<1$ such that $s_{\mathrm{env}}(\gamma_s)=s_{\mathrm{env}}(\gamma_w)=\frac{p}{q}$. \end{lemma}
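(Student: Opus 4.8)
The plan is to work directly with the envelope velocity function $s_{\mathrm{env}}(\gamma) = -\log(\lambda(\gamma))/\log(\gamma)$ on the interval $(0,1)$, where $\lambda(\gamma) = (a + b\gamma + a\gamma^2)/\gamma = a\gamma^{-1} + b + a\gamma$ with $a = rm/2 > 0$ and $b = r(1-m)$. Under the hypothesis $1 < r < 2/m$ we have $a > 0$ and $b = r(1-m) > r - rm/\ ... > 0$ (indeed $rm < 2$ so $m < 2/r$, and since $r > 1$, $1 - m > 1 - 2/r \cdot \tfrac{1}{1}$... more simply $b>0$ iff $m<1$, which holds because $m \le 1$ is a proportion; I will note $b \ge 0$ with equality only in a degenerate case $m=1$, handled separately or excluded). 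The key structural fact is that $\lambda(\gamma)$ is strictly convex on $(0,\infty)$ (since $\lambda''(\gamma) = 2a\gamma^{-3} > 0$), is positive there, and $\to +\infty$ as $\gamma \to 0^+$ and as $\gamma \to \infty$; it has a unique minimum at $\gamma_* = 1$ where $\lambda(1) = 2a + b = rm + r(1-m) = r > 1$.

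**Step 1: $s_{\mathrm{env}}$ has a unique minimum on $(0,1)$, and $\slin < 1$.** Write $\sigma = -\log\gamma > 0$ and $L(\gamma) = \log\lambda(\gamma)$, so $s_{\mathrm{env}} = L/\sigma = -\log\lambda(\gamma)/\log\gamma$. Since $\lambda(1) = r > 1$, near $\gamma = 1^-$ we have $\log\lambda > 0$ and $-\log\gamma \to 0^+$, so $s_{\mathrm{env}}(\gamma) \to +\infty$ as $\gamma \to 1^-$; similarly as $\gamma \to 0^+$, $\log\lambda(\gamma) \sim \log(a/\gamma) \to +\infty$ while $-\log\gamma \to +\infty$ at the same rate, so $s_{\mathrm{env}}(\gamma) \to 1$. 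Hence $s_{\mathrm{env}}$ is continuous and positive on $(0,1)$, tends to $1$ at the left endpoint and to $+\infty$ at the right, so it attains an interior minimum. To get uniqueness I would show that critical points of $s_{\mathrm{env}}$ are unique. Differentiating, $s_{\mathrm{env}}'(\gamma) = 0$ is equivalent to $\gamma \lambda'(\gamma) \log\gamma = \lambda(\gamma)\log\lambda(\gamma)$, i.e. to $\frac{\gamma\lambda'(\gamma)}{\lambda(\gamma)} = \frac{\log\lambda(\gamma)}{\log\gamma}$. The left side is the logarithmic derivative $(\log\lambda)'(\gamma)\cdot\gamma = \frac{d\log\lambda}{d\log\gamma}$; setting $t = \log\gamma$ and $h(t) = \log\lambda(e^t)$, the condition becomes $h'(t) = h(t)/t$, which says the graph of $h$ at $t$ passes through the origin along its tangent line. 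Since $h(t) = \log(a e^{-t} + b + a e^t)$ is a strictly convex function of $t$ (composition: $a e^{-t}+b+a e^t$ is convex and positive, and $\log$ of a positive convex function whose min value exceeds... actually I should verify $h'' > 0$ directly, which is a short computation using convexity of $\lambda$ in $\gamma$ plus $\lambda > 0$), a strictly convex function has at most one point where the tangent line passes through a fixed external point — here the origin, and the origin lies strictly below the graph of $h$ on $(-\infty,0)$ since $h(t) > 0$ there once $a e^{-t} + b + ae^t > 1$, which holds for all $t < 0$ when $a + b \ge 1$, and in general for $t$ sufficiently negative; a cleaner route is to note tangency-through-origin points of a strictly convex curve are unique on any interval where the curve stays on one side of the origin. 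This gives the unique minimizer $\gamma_{\mathrm{lin}} \in (0,1)$ and $\slin = s_{\mathrm{env}}(\gamma_{\mathrm{lin}}) < s_{\mathrm{env}}(\gamma)$ for nearby $\gamma$; that $\slin < 1$ follows because $s_{\mathrm{env}}(\gamma) < 1$ for $\gamma$ close enough to... wait, $s_{\mathrm{env}} \to 1$ from which side? I would check the sign of $s_{\mathrm{env}}(\gamma) - 1 = \frac{-\log\lambda(\gamma) + \log\gamma}{-\log\gamma} = \frac{\log(\gamma/\lambda(\gamma))}{-\log\gamma}$. Since $\gamma/\lambda(\gamma) = \gamma^2/(a + b\gamma + a\gamma^2) < 1$ for $\gamma \in (0,1)$ (as $a > 0$), the numerator is negative and $-\log\gamma > 0$, so $s_{\mathrm{env}}(\gamma) < 1$ on all of $(0,1)$. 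Therefore $\slin < 1$ automatically, and I don't even need the limit argument for this part.

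**Step 2: exactly two decay rates for each rational speed in $(\slin, 1)$.** Given $\slin < p/q < 1$, I want to count solutions of $s_{\mathrm{env}}(\gamma) = p/q$ in $(0,1)$. From Step 1, $s_{\mathrm{env}}$ is continuous on $(0,1)$, strictly decreasing on $(0,\gamma_{\mathrm{lin}})$ and strictly increasing on $(\gamma_{\mathrm{lin}},1)$ (this monotonicity follows from uniqueness of the critical point plus the boundary behaviors: it decreases from $1$ down to $\slin$, then increases from $\slin$ to $+\infty$). Since $p/q$ lies strictly between $\slin = \min s_{\mathrm{env}}$ and $1 = \lim_{\gamma\to 0^+} s_{\mathrm{env}}$ on the left branch and between $\slin$ and $+\infty$ on the right branch, the intermediate value theorem plus strict monotonicity on each branch gives exactly one solution $\gamma_s \in (0,\gamma_{\mathrm{lin}})$ and exactly one solution $\gamma_w \in (\gamma_{\mathrm{lin}},1)$, with $0 < \gamma_s < \gamma_w < 1$ as claimed. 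The subscripts $s$ (strong) and $w$ (weak) match the decay-rate ordering.

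**Main obstacle.** The only genuinely delicate point is the uniqueness of the critical point of $s_{\mathrm{env}}$ in Step 1 — equivalently, strict convexity of $h(t) = \log\lambda(e^t)$ and the geometric claim that a strictly convex curve has at most one tangent line through the origin on a region where it stays above (or below) the origin. I would nail down $h'' > 0$ by direct computation: with $\mu(t) = ae^{-t} + b + ae^t > 0$, $h = \log\mu$, $h'' = \mu''/\mu - (\mu'/\mu)^2$, and $\mu'' \mu - (\mu')^2 = (ae^{-t} + ae^t)(ae^{-t} + b + ae^t) - (ae^t - ae^{-t})^2$; expanding, the $a^2$ cross terms give $a^2(e^{2t} + e^{-2t} + 2) $... after expansion and cancellation this reduces to something manifestly positive like $b(ae^{-t} + ae^t) + 4a^2 > 0$, using $a > 0$ and $b \ge 0$. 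Then $h'' = [\mu''\mu - (\mu')^2]/\mu^2 > 0$. With strict convexity in hand the uniqueness is standard. Everything else is elementary continuity, monotonicity, and IVT bookkeeping, plus checking the parameter hypothesis $1 < r < 2/m$ is exactly what is needed for $\lambda(1) = r > 1$ (so that $s_{\mathrm{env}} \to +\infty$ at $\gamma = 1$, giving an interior min rather than a boundary min) and for $a, b > 0$ (positivity of coefficients, used throughout).
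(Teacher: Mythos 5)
Your route---pass to $t=\log\gamma$, show $h(t)=\log\lambda(e^t)$ is strictly convex, characterize critical points of $s_{\mathrm{env}}$ as points whose tangent line passes through the origin, then finish with IVT and monotonicity on the two branches---is genuinely different from the paper's proof (which compares $F_1(\gamma)=(b\gamma+2a\gamma^2)(-\log\gamma)$ with $F_2(\gamma)=(a+b\gamma+a\gamma^2)(-\log(a+b\gamma+a\gamma^2))$ and gets uniqueness from monotonicity of $F_1-F_2$, via positivity of the quadratic $a+(b-1)\gamma+a\gamma^2$), and your convexity computation $\mu''\mu-(\mu')^2=4a^2+b(ae^{-t}+ae^{t})>0$ is correct. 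However, two steps are wrong as written. The claim that $s_{\mathrm{env}}(\gamma)<1$ on all of $(0,1)$ rests on a sign slip: $s_{\mathrm{env}}(\gamma)-1=\frac{-\log\lambda(\gamma)-\log\gamma}{\log\gamma}=\frac{\log\bigl(\gamma\lambda(\gamma)\bigr)}{-\log\gamma}=\frac{\log\left(a+b\gamma+a\gamma^2\right)}{-\log\gamma}$, not $\frac{\log(\gamma/\lambda(\gamma))}{-\log\gamma}$. Hence $s_{\mathrm{env}}(\gamma)<1$ exactly where $a+b\gamma+a\gamma^2<1$; near $\gamma=1$ this expression equals $r>1$, so $s_{\mathrm{env}}>1$ there, consistent with your own observation that $s_{\mathrm{env}}\to+\infty$ as $\gamma\to1^-$ (your two claims contradict each other). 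The correct statement---$s_{\mathrm{env}}<1$ for $\gamma$ small because $a+b\gamma+a\gamma^2\to a$ and $a<1$---is precisely where the hypothesis $r<2/m$ must enter; in your write-up that hypothesis is never genuinely used, and your erroneous inequality would ``prove'' $\slin<1$ even when $a>1$, a regime where the paper's remark notes $\slin>1$. The same corrected fact is also needed to guarantee the minimum is attained in the interior: limits $1$ and $+\infty$ at the two endpoints alone do not force an attained minimum, you need a point where $s_{\mathrm{env}}<1$.

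The second gap is in the uniqueness step, which you yourself flag as the delicate point. Both of your geometric justifications are false as stated: a strictly convex function can admit two tangent lines through a single external point, and staying on one side of the origin does not help (take $h(t)=t^2+1$, which lies strictly above the origin yet has tangents through the origin at $t=\pm1$). What saves the argument is that you only need uniqueness on the half-line $t<0$: the tangent's $y$-intercept $g(t)=h(t)-t\,h'(t)$ satisfies $g'(t)=-t\,h''(t)>0$ for $t<0$, so $g$ has at most one zero there; equivalently, tangent lines at distinct points $t_1<t_2<0$ of a strictly convex curve intersect at an abscissa strictly between $t_1$ and $t_2$, so they cannot both pass through the origin, whose abscissa is $0>t_2$. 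With that one-line repair, and with the corrected use of $a<1$ above, your argument does go through and gives the same conclusions as the paper by a cleaner mechanism; as submitted, though, both the ``$\slin<1$'' step and the uniqueness step are unsound.
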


\begin{proof} Note that $r<\frac{2}{m}$ is equivalent to $a<1$.  Express $ s_{\mathrm{env}}(\gamma)$ as
\[ s_{\mathrm{env}}(\gamma)=1-\frac{\log\left(a+b\gamma+a\gamma^2\right)}{\log{\gamma}}, \]
from which it is clear that $\lim_{\gamma\to 0}  s_{\mathrm{env}}(\gamma)=1$.  Apply the derivative:
\[  s_{\mathrm{env}}'(\gamma)=\frac{b+2a\gamma}{a+b\gamma+a\gamma^2}\frac{-1}{\log{\gamma}}+\frac{\log\left(a+b\gamma+a\gamma^2\right)}{\gamma \log^2 \gamma}. \]
Critical points therefore occur whenever
\[ \left(b\gamma+2a\gamma^2\right) (-\log \gamma) =- \left(a+b\gamma+a\gamma^2\right) \log\left(a+b\gamma+a\gamma^2\right). \]
Let
\[ F_1(\gamma)=  \left(b\gamma+2a\gamma^2\right) (-\log \gamma), \quad F_2(\gamma)=\left(a+b\gamma+a\gamma^2\right) (-\log\left(a+b\gamma+a\gamma^2\right)), \]
and note $\lim_{\gamma\to 0} F_1(\gamma)=0$, $F_1(1)=0$, $F_2(0)=-a \log a$, and $F_2(1)=-r\log(r)$.  Since $a<1$, we have $F_1(0)=0< F_2(0)$, while since $r>1$, we have $F_2(1)<0=F_1(1)$. Since these functions are continuous, there must be an intermediate value at which they are equal.  This gives the existence of a decay rate $\gamma$ such that $s_{\mathrm{env}}'(\gamma)=0$.  To show that this value is unique, we compute derivatives
\begin{eqnarray*} F_1'(\gamma)&=& (b+4a\gamma) (-\log \gamma) - (b+2a\gamma), \\
F_2'(\gamma)&=& (b+2a\gamma)(- \log\left(a+b\gamma+a\gamma^2\right))-(b+2a\gamma) .
\end{eqnarray*}
 We then see that if $\gamma<a+b\gamma+a\gamma^2$, then we have that $F_1'(\gamma)>F_2'(\gamma)$ for all $0<\gamma<1$, and therefore the intersection (and therefore the root of $s_{\mathrm{env}}'(\gamma)$) must be unique.  Define the quadratic function $p(\gamma)=a+(b-1)\gamma+a\gamma^2$, and note if $b>1$, then all coefficients are positive, and so $p(\gamma)>0$ for all $0<\gamma<1$.  If $b<1$, then note that $p(0)=a>0$, $p'(0)=(b-1)<0$, $p(1)=r-1>0$, and $p'(1)=r>1$, and the minimum of $p(\gamma)$ occurs at $(1-b)/(2a)$.  Computing the value at the minimum, we obtain
\[ a-\frac{(b-1)^2}{4a^2}=\frac{4a^2-(b-1)^2}{4a^2} =\frac{(r-1)\left(2rm-r+1\right)}{r^2m^2}>0,\]
where the last bound holds since $2rm-r+1=1-b+rm>0$.  The final part of the Lemma now follows from uniqueness of the zero of $s_{\mathrm{env}}'(\gamma)$.  
\end{proof} 

\begin{rmk} The restriction $a=\frac{rm}{2}<1$ is natural in the sense that a speed one front always exists in the case $a>1$, regardless of the value of $c$.  The front in this case is identically one to the left of the interface and identically zero to the right of the interface.  Therefore, the natural decay rate in this case is $\gamma=0$, which minimizes $s_{\mathrm{env}}(\gamma)$ on the interval $[0,1]$.  

A related point is that when $a>1$, it holds that $s_{\mathrm{lin}}>1$, with $\gamma_{\mathrm{lin}}<0$ corresponding to an oscillating front.  Of course, due to the nature of the model, the fastest possible invasion speed is one, and these faster fronts are therefore not observed.  This phenomena has previously been observed in \cite{browne} in the context of feed-forward networks where the fronts are referred to as frustrated.
\end{rmk}

\begin{lemma}\label{lem:mvar} Suppose that $1<r<\frac{2}{m}$.  Then for $0<m<1$, it holds that 
\[ \frac{d\slin}{dm}>0. \]
\end{lemma}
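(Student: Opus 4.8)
The plan is to differentiate $\slin$ via the envelope theorem and then read off the sign directly from the dispersion relation. By Lemma~\ref{lem:slin}, for $1<r<\tfrac2m$ the minimum defining $\slin$ is attained at a unique interior decay rate $\gamma_{\mathrm{lin}}=\gamma_{\mathrm{lin}}(m)\in(0,1)$, characterized by $s_{\mathrm{env}}'(\gamma_{\mathrm{lin}})=0$. Writing the critical-point equation in the form $F_1(\gamma,m)=F_2(\gamma,m)$ as in the proof of Lemma~\ref{lem:slin}, one has $\partial_\gamma(F_1-F_2)>0$ at $\gamma_{\mathrm{lin}}$, so the implicit function theorem makes $m\mapsto\gamma_{\mathrm{lin}}(m)$ a $C^1$ function. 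Differentiating $\slin(m)=s_{\mathrm{env}}(\gamma_{\mathrm{lin}}(m),m)$ and using $\partial_\gamma s_{\mathrm{env}}(\gamma_{\mathrm{lin}},m)=0$ then yields
\[
\frac{d\slin}{dm}=\frac{\partial s_{\mathrm{env}}}{\partial m}\big(\gamma_{\mathrm{lin}}(m),m\big),
\]
so it remains to show this partial derivative is positive.

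For the second step I would compute $\partial_m s_{\mathrm{env}}$ directly. From $s_{\mathrm{env}}(\gamma,m)=-\log\lambda(\gamma,m)/\log\gamma$ with $\lambda(\gamma,m)=\tfrac{rm}{2\gamma}+r(1-m)+\tfrac{rm\gamma}{2}>0$, we get
\[
\frac{\partial s_{\mathrm{env}}}{\partial m}=\frac{-1}{\log\gamma}\cdot\frac{1}{\lambda}\,\frac{\partial\lambda}{\partial m},\qquad
\frac{\partial\lambda}{\partial m}=\frac r2\Bigl(\frac1\gamma-2+\gamma\Bigr)=\frac{r(1-\gamma)^2}{2\gamma}.
\]
Since $\log\gamma<0$, $\lambda>0$, and $(1-\gamma)^2/\gamma>0$ for $0<\gamma<1$, every factor has a definite sign and $\partial_m s_{\mathrm{env}}(\gamma,m)>0$ for all $\gamma\in(0,1)$ --- in particular at $\gamma_{\mathrm{lin}}(m)$ --- which gives the claim.

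I expect the only point requiring care to be the justification that $\slin$ is differentiable, i.e.\ that the minimizing decay rate $\gamma_{\mathrm{lin}}(m)$ depends smoothly on $m$; this is handled as above via the nondegeneracy of the critical point already extracted in the proof of Lemma~\ref{lem:slin}. As a sanity check, since $\partial_m\lambda>0$ holds at \emph{every} $\gamma\in(0,1)$, the map $\gamma\mapsto s_{\mathrm{env}}(\gamma,m)$ is pointwise strictly increasing in $m$, so $\slin(m)=\min_\gamma s_{\mathrm{env}}(\gamma,m)$ is already strictly increasing; the envelope computation simply upgrades this to strict positivity of the derivative. The remaining work (verifying the formula for $\partial_m\lambda$) is routine algebra.
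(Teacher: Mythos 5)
Your proposal is correct and follows essentially the same route as the paper: differentiate $\slin$ along the minimizer, use $\partial_\gamma s_{\mathrm{env}}=0$ to kill the $\gamma$-dependence, and conclude from $\partial_m\lambda=\frac{r(1-\gamma)^2}{2\gamma}>0$ together with $\log\gamma<0$ that $\frac{d\slin}{dm}>0$, which matches the paper's computation $\frac{d\slin}{dm}=-\frac{1}{\log\gamma}\,\frac{(\gamma-1)^2}{m(\gamma-1)^2+2\gamma}$. Your additional implicit-function-theorem justification that $\gamma_{\mathrm{lin}}(m)$ is $C^1$ (via $\partial_\gamma(F_1-F_2)>0$ from Lemma~\ref{lem:slin}) is a welcome bit of rigor the paper leaves implicit.
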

\begin{proof}
Define $\slin$ as $s_{\mathrm{env}}(\gamma)$ for the unique $\gamma$ such that $s_{\mathrm{env}}'(\gamma)=0$.  Then implicit differentiation gives
\[ \frac{d\slin}{dm}=\frac{\partial s_{\mathrm{env}}}{\partial \gamma}\frac{\partial \gamma}{\partial m} +\frac{\partial s_{\mathrm{env}}}{\partial \lambda}\frac{\partial \lambda}{\partial m}. \]
The first term is zero, and we calculate
\begin{eqnarray*} \frac{\partial s_{\mathrm{env}}}{\partial \lambda}\frac{\partial \lambda}{\partial m}&=& -\frac{1}{\lambda \log(\gamma)} \frac{1}{\gamma}\left( \frac{r}{2}-r\gamma +\frac{r}{2}\gamma^2\right) \\
&=& - \frac{1}{\log(\gamma)} \frac{ \left(\frac{1}{2}-\gamma+\frac{1}{2}\gamma^2 \right)}{m\left(\frac{1}{2}-\gamma+\frac{1}{2}\gamma^2 \right)  +\gamma } \\
&=& - \frac{1}{\log(\gamma)} \frac{ \left(\gamma-1\right)^2}{m\left(\gamma-1\right)^2  +2\gamma }>0.
\end{eqnarray*}
\end{proof}

Lemma~\ref{lem:slin} guarantees the existence of two decaying solutions to the linear problem (\ref{eq:linear}).  Recall from our discussion in Section~\ref{sec:outline} that we expect to require $q-p$ such solutions.  It will turn out that we will utilize $\gamma_s$ and $q-p-1$ other solutions.  We turn our attention to those solutions now.   Let $s=\frac{p}{q}$.  Then from the envelope velocity formula, we obtain
\[ \frac{p}{q}=-\frac{\log(\lambda(\gamma))}{\log(\gamma)}, \]
and unraveling this equation, we find that $\gamma$ must be a root of the polynomial
\be \gamma^{q-p}=\left(a+b\gamma+a\gamma^2\right)^q.\label{eq:mainpoly} \ee

\begin{lemma}\label{lem:relevantroots} Suppose that $1<r<\frac{2}{m}$ and consider  $1>s=\frac{p}{q}>s_{\mathrm{lin}}$.  Let $\gamma_s$ (strong decay) and $\gamma_w$ (weak decay) be the unique real values from Lemma~\ref{lem:slin} for which $s_{\mathrm{env}}(\gamma_{s,w})=\frac{p}{q}$ with $0<\gamma_s<\gamma_{\mathrm{lin}}<\gamma_w$.  Then there exist $q-p$ roots of (\ref{eq:mainpoly}) with modulus less than or equal to $\gamma_s$.  
\end{lemma}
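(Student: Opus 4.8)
The plan is to locate the roots of (\ref{eq:mainpoly}) by Rouch\'e's theorem applied on a circle of radius slightly larger than $\gamma_s$. Rewrite (\ref{eq:mainpoly}) as $f(\gamma)=g(\gamma)$, where $f(\gamma)=\gamma^{q-p}$ and $g(\gamma)=\left(a+b\gamma+a\gamma^2\right)^q$; then $g-f$ is a polynomial (it has degree $2q$, since $q-p<q$), so it has finitely many roots. I will compare the number of zeros of $g-f$ and of $f$ inside disks centered at the origin, noting that $f$ has exactly $q-p$ zeros, all at the origin and counted with multiplicity, inside any such disk.

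The crux is the inequality $\left(a+b\rho+a\rho^2\right)^q<\rho^{q-p}$ for every real $\rho\in(\gamma_s,\gamma_w)$. Performing the same manipulation of (\ref{eq:senv}) that produced (\ref{eq:mainpoly}), but now retaining the sign $\log\rho<0$ valid for $\rho\in(0,1)$, this inequality is equivalent to $s_{\mathrm{env}}(\rho)<\frac{p}{q}$. The latter holds throughout $(\gamma_s,\gamma_w)$: by Lemma~\ref{lem:slin} the equation $s_{\mathrm{env}}(\rho)=\frac{p}{q}$ has only the solutions $\gamma_s,\gamma_w$ in $(0,1)$, so $s_{\mathrm{env}}-\frac{p}{q}$ keeps a constant sign on $(\gamma_s,\gamma_w)$, and at the interior point $\gamma_{\mathrm{lin}}$ it equals $\slin-\frac{p}{q}<0$ by the hypothesis $\frac{p}{q}>\slin$. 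Now fix $\delta\in(0,\gamma_w-\gamma_s)$ and set $\rho=\gamma_s+\delta\in(\gamma_s,\gamma_w)\subset(0,1)$. On the circle $|\gamma|=\rho$, the triangle inequality together with $a=\tfrac{rm}{2}>0$ and $b=r(1-m)>0$ gives
\[
|g(\gamma)|=\left|a+b\gamma+a\gamma^2\right|^q\le\left(a+b\rho+a\rho^2\right)^q<\rho^{q-p}=|f(\gamma)| .
\]

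Hence $|g|<|f|$ on $\{|\gamma|=\gamma_s+\delta\}$, so $g-f$ has no zero there, and Rouch\'e's theorem gives that $g-f$ has exactly $q-p$ zeros (counted with multiplicity) in the open disk $\{|\gamma|<\gamma_s+\delta\}$. Since this holds for every $\delta\in(0,\gamma_w-\gamma_s)$ while the zero set of $g-f$ is a fixed finite set, letting $\delta\downarrow0$ and using $\{|\gamma|\le\gamma_s\}=\bigcap_{\delta>0}\{|\gamma|<\gamma_s+\delta\}$ shows that exactly $q-p$ zeros of $g-f$ have modulus at most $\gamma_s$ — which proves, and slightly strengthens, the claim. (One of these is $\gamma_s$ itself, since $s_{\mathrm{env}}(\gamma_s)=\frac{p}{q}$ makes $\gamma_s$ a root of (\ref{eq:mainpoly}).)

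The one nontrivial step is the displayed inequality on $(\gamma_s,\gamma_w)$, and this is precisely where the standing assumptions enter: the shape of $s_{\mathrm{env}}$ from Lemma~\ref{lem:slin} uses $1<r<\frac{2}{m}$ (equivalently $a<1$), and $\frac{p}{q}>\slin$ guarantees that $(\gamma_s,\gamma_w)$ is a nondegenerate interval containing $\gamma_{\mathrm{lin}}$; everything else is the routine Rouch\'e argument and the triangle inequality. A minor point to watch is that the test radius $\rho=\gamma_s+\delta$ must stay below $1$, so that the equivalence with $s_{\mathrm{env}}(\rho)<\frac{p}{q}$ is legitimate — this is automatic since $\gamma_w<1$ by Lemma~\ref{lem:slin}.
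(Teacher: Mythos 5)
Your proof is correct and follows essentially the same route as the paper: the identical Rouch\'e decomposition $f(\gamma)=\gamma^{q-p}$ versus $g(\gamma)=\left(a+b\gamma+a\gamma^2\right)^q$ on circles of radius slightly larger than $\gamma_s$, followed by shrinking the radius to capture the closed disk $\{|\gamma|\le\gamma_s\}$. The only difference is that you justify the strict inequality $|g|<|f|$ on those circles explicitly via the sign of $s_{\mathrm{env}}-\frac{p}{q}$ on $(\gamma_s,\gamma_w)$, which carefully fills in the step the paper dispatches with the brief remark that $\gamma_s$ is the minimal positive root.
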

\begin{proof} We will use Rouche's Theorem to count zeros of the polynomial $\gamma^{q-p}-\left(a+b\gamma+a\gamma^2\right)^q$.  Denote $f(\gamma)=\gamma^{q-p}$ which has a root of order  $q-p$  at the origin.  Denote $g(\gamma)=\left(a+b\gamma+a\gamma^2\right)^q$.  On the circle of radius $\gamma_s$, since $g(\gamma)$ is a polynomial with positive coefficients, we have that $g(\gamma_s)=f(\gamma_s)$ and $|g(\gamma)|< |f(\gamma)|$ for all other $|\gamma|=\gamma_s$.  Let $\epsilon>0$.  Since we are studying the minimal root $\gamma_s$, we see that $|f(\gamma)|$ is strictly larger than $|g(\gamma)|$ on the ball of radius $\gamma_s+\epsilon$ for $\epsilon$ sufficiently small.  Thus, Rouche's Theorem applies, and there are exactly $q-p$ roots inside this ball.  Since $\epsilon$ is arbitrary, the result holds as $\epsilon\to 0$ as well.  
\end{proof}

\begin{rmk} We have thus far considered fronts moving to the right with $s>0$.  Since (\ref{eq:main}) is invariant with respect to the change $i \ \mapsto -i$, our analysis would carry over to fronts propagating to the left with speed $s<0$.  To see this, consider one of the roots of (\ref{eq:mainpoly}) defined in Lemma~\ref{lem:relevantroots}.  Let $z=\frac{1}{\gamma}$. Then $z$ satisfies
\[ z^{p-q}=\left( a+\frac{b}{z}+\frac{a}{z^2}\right)^q, \]
which after rearranging can be expressed as 
\[ z^{p+q}=\left( a+bz+az^2\right)^q. \]
This is the same polynomial that is obtained if one sets $s=-p/q$ in (\ref{eq:senv}).  
\end{rmk}

\section{Locked Fronts}\label{sec:lockedfronts}
In this section, we construct locked fronts propagating at rational speed and obtain bounds on the regions in parameter space for which they exist.  Before treating the general case, we will demonstrate what these fronts look like in two specific cases.  We assume throughout the remainder of this paper that $r>1$ (giving instability of the zero state) and $rm<2$ (allowing for the existence of fronts with speed less than one).  

\subsection{Examples}
We present several examples.  Note that speed $\frac{1}{2}$ has been discussed elsewhere; see \cite{wang19}.  The next simplest case is speed $\frac{1}{3}$, which we discuss below.  We also consider the case of speed $\frac{2}{5}$ before generalizing to arbitrary rational speeds.  

{\bf Example} Speed $\frac{1}{3}$.   In this case the polynomial (\ref{eq:mainpoly}) has six roots. Whenever $\frac{1}{3}>s_{\mathrm{lin}}(r,m)$, there is a unique strong decay rate $\gamma_1$.  By Lemma~\ref{lem:relevantroots}, there are exactly two roots with modulus less than or equal to $\gamma_1$.  Label the second root $\gamma_2<0$ with $0<-\gamma_2<\gamma_1$.  We then assume that the front is given by a semi-infinite sequence of ones on the left, followed by a linear combination of the linear solutions $\gamma_j^i$ for each lattice site $i>0$ on the right.  That is, we seek a solution 
\[ \phi_i=\left\{ \begin{array}{cc} 1 & i\leq 0 \\ \sum_{j=1}^{p-q} k_j\gamma_j^i & i\geq 1 \end{array}\right. . \] 
Since the speed is $\frac{1}{3}$, we impose that three generations later, the front should have the same form but shifted to the right by one lattice site.   

Expanding the front over three generations, we will show below that the front evolves as follows: 
\[\begin{array}{ccccc}
\text{Lattice Site} & i=0 & i=1 & i=2 & i=3 \\
\text{Generation 0 } & 1 & \sum k_j \gamma_j & \sum k_j \gamma_j^2 & \sum k_j \gamma_j^3 \\
\text{Generation 1 } &1 & \sum k_j \gamma_j^{2/3} & \sum k_j \gamma_j^{5/3} & \sum k_j \gamma_j^{8/3} \\
\text{Generation 2 } &1 & \sum k_j \gamma_j^{1/3} & \sum k_j \gamma_j^{4/3} & \sum k_j \gamma_j^{7/3} \\
\text{Generation 3 } &1 & 1 & \sum k_j \gamma_j & \sum k_j \gamma_j^2
\end{array} \]
We must find conditions on the constants $k_j$ appearing in the linear combination that ensure that this is a solution, and we must verify the fractional powers appearing in intermediate generations.  

Rational roots of $\gamma_j$ are not uniquely defined, so we therefore use the first generation to define
\[ \gamma_j^{2/3}=\left(a+b\gamma_j+a\gamma_j^2\right),\]
and note for future reference that 
\[ \gamma_j^{-1/3}=\frac{1}{\gamma_j} \left(a+b\gamma_j+a\gamma_j^2\right). \]
Let us now justify the structure of the front stated above.  Recall that we say that a lattice site is at capacity if its population is one.  In each generation, if a lattice site has no parents at capacity, then the expression for the front at the that lattice site holds by virtue of the polynomial (\ref{eq:mainpoly}).  At all other lattice sites, conditions need to be imposed.  If the solution at a particular lattice site is below capacity but has a parent which is at capacity, then this enforces a condition on the constants $k_1$ and $k_2$. 


In this example, we see that conditions on the $k_i$ are enforced in generations one and two at the first lattice site below capacity.  In the first generation, we require
\[ \sum k_j\gamma_j^{2/3} = a+b\sum k_j \gamma_j+ a \sum k_j \gamma_j^2, \]
from which we note that if $k_1+k_2=1$, then this equation can be re-written as 
\[ \sum k_j\left(\gamma_j^{2/3} - a -b\gamma_j-a\gamma_j^2\right)=0,\]
and equality is seen to hold  by the definition of $\gamma_j^{2/3}$.  In the second generation, we instead require 
\be \sum k_j\gamma_j^{1/3} = a+b\sum k_j \gamma_j^{2/3}+ a \sum k_j \gamma_j^{4/3}, \label{eq:13gen2} \ee
and if 
\[ \frac{k_1}{\gamma_1^{1/3}}+\frac{k_2}{\gamma_2^{1/3}} =1, \]
then (\ref{eq:13gen2}) can be written as 
\[ \sum k_j\gamma_j^{-1/3}\left(\gamma_j^{2/3} - a -b\gamma_j-a\gamma_j^2\right)=0,\]
which is once again zero.  This determines a system of equations for $k_j$
\[ \left(\begin{array}{cc} 1& 1 \\ \gamma_1^{-1/3} & \gamma_2^{-1/3}   \end{array}\right) \left(\begin{array}{c} k_1 \\ k_2 \end{array}\right) = \left(\begin{array}{c} 1 \\ 1  \end{array}\right),\]
with solution
\[ \left(\begin{array}{c} k_1 \\ k_2 \end{array}\right) = \frac{1}{\gamma_2^{-1/3}-\gamma_1^{-1/3}} \left(\begin{array}{c} \gamma_2^{-1/3}-1 \\ 1-\gamma_1^{-1/3} \end{array}\right), \]
where the determinant can be simplified to 
\[ \gamma_2^{-1/3}-\gamma_1^{-1/3} = a\left(\gamma_2-\gamma_1\right)+a\left(\frac{1}{\gamma_2}-\frac{1}{\gamma_1}\right). \]
Note that the determinant is always negative in this case.  We argue geometrically that $k_1\gamma_1+k_2\gamma_2>0$.  The equations defining $k_1$ and $k_2$ can be interpreted as 
\[ \left(\begin{array}{c} k_1 \\ k_2\end{array}\right) \cdot \left(\begin{array}{c} 1 \\ 1\end{array}\right)  =1, \ \left(\begin{array}{c} k_1 \\ k_2\end{array}\right) \cdot \left(\begin{array}{c} \gamma_1^{-1/3} \\ \gamma_2^{-1/3}\end{array}\right) =1. \]
The ones vector is obviously in the first quadrant.  The vector $(\gamma_1^{-1/3} , \gamma_2^{-1/3})^T$ is in the fourth quadrant.  Moreover, since $-\gamma_2^{-1/3}>\gamma_1^{-1/3}$, we have that the angle between these two vectors exceeds $\frac{\pi}{2}$.  Therefore the angle $\theta=\tan^{-1}(k_2/k_1)$ must satisfy $-\frac{\pi}{4}<\theta<\frac{\pi}{4}$, and since $-\frac{\pi}{4}<\tan^{-1}(\gamma_2/\gamma_1)<0$, it follows that 
\[ \left(\begin{array}{c} k_1 \\ k_2\end{array}\right) \cdot \left(\begin{array}{c} \gamma_1 \\ \gamma_2\end{array}\right) =k_1\gamma_1+k_2\gamma_2 >0. \]
A similar argument works for the vector $(\gamma_1^i,\gamma_2^i)^T$ for all $i\geq 1$, and therefore we obtain positivity of the front.  Positivity of the front in all intermediate generations then follows since $ax+by+az>0$ if $x$, $y$, and $z$ are all positive. 

Finally, it remains to specify the values of $c$ which are compatible with the existence of the front.   In this example, one such condition is imposed in the second generation at the first lattice site below capacity.  The concern is that the population at this site will be so large so as to exceed the critical population density $c$ and thereby transition to  one following reproduction.  To avoid this, we require
\[ c>c_{\mathrm{min}}(r,m):= \frac{m}{2}+(1-m)\sum k_j\gamma_j^{2/3}+\frac{m}{2} \sum k_j \gamma_j^{5/3} . \]
A second condition is imposed in the second generation, where we require that sufficient population density occurs in the second position so that the reproduction function maps the population to capacity.  This requires
\[ c<c_{\mathrm{max}}(r,m):=  \frac{m}{2}+(1-m)\sum k_j\gamma_j^{1/3}+\frac{m}{2} \sum k_j \gamma_j^{4/3}. \]

{\bf Example} Speed $\frac{2}{5}$.  In this case, the polynomial (\ref{eq:mainpoly}) has ten roots, the smallest three of which are of interest to us.  Each of these three roots gives an exponentially decaying solution to the linearized equation (\ref{eq:linear}).  Once again, we seek a front solution given as a semi-infinite string of ones, followed by an exponentially decaying tail made up of a linear combination of the relevant roots.  
To solve for $k_j$, we expand the front over five generations: 
\[\begin{array}{ccccc}
\text{Generation 0 } & 1 & \sum k_j \gamma_j & \sum k_j \gamma_j^2 & \sum k_j \gamma_j^3 \\
\text{Generation 1 } &1 & \sum k_j \gamma_j^{3/5} & \sum k_j \gamma_j^{8/5} & \sum k_j \gamma_j^{13/5} \\
\text{Generation 2 } &1 & \sum k_j \gamma_j^{1/5} & \sum k_j \gamma_j^{6/5} & \sum k_j \gamma_j^{11/5} \\
\text{Generation 3 } &1 & 1 & \sum k_j \gamma_j^{4/5} & \sum k_j \gamma_j^{9/5} \\
\text{Generation 4 } &1 & 1 & \sum k_j \gamma_j^{2/5} & \sum k_j \gamma_j^{7/5} \\
\text{Generation 5 } &1 & 1 & 1 & \sum k_j \gamma_j 
\end{array}\]
Conditions on the constants $k_j$ are imposed in the first, second, and fourth generations.  In the first generation, we require
\[ \sum k_j \gamma_j^{3/5} = a+ b \sum k_j \gamma_j+ a\sum k_j \gamma_j^2. \]
Therefore if $\sum_j k_j=1$, we can substitute 
\[ \sum k_j \gamma_j^{3/5} = a\sum k_j+ b \sum k_j \gamma_j+ a\sum k_j \gamma_j^2, \]
and rearrange to find 
\[ 0= \sum k_j\left[ a+b\gamma_j+a\gamma_j^2 -\gamma_j^{3/5}\right], \] 
where equality holds since $\gamma_j$ is a root of (\ref{eq:mainpoly}).  Furthermore, we note that since there is some ambiguity in the definition of rational roots, this equation also serves to define the root
\be  \gamma_j^{3/5}=a+b\gamma_j+a\gamma_j^2. \label{eq:defof35} \ee
Since 3 and 5 are relatively prime, all other roots can be obtained by taking powers of $\gamma_j^{3/5}$ and $\gamma_j = \gamma_j^{5/5}$.

The second condition is imposed at the second generation, where we require 
\[ \sum k_j \gamma_j^{1/5} = a+ b \sum k_j \gamma_j^{3/5}+a \sum k_j \gamma_j^{8/5}. \]
In this case, if $\sum k_j \gamma_j^{-2/5} =1$, then we can substitute and use (\ref{eq:defof35}) to show equality.  The final equation to be satisfied occurs in the fourth generation and is 
\[ \sum k_j \gamma_j^{2/5} = a+ b \sum k_j \gamma_j^{4/5}+a \sum k_j \gamma_j^{9/5}, \]
and the condition $\sum k_j \gamma_j^{-1/5}=1$ implies that this condition is satisfied.   

We then have three equations for $k_j$ that take the form
\[ \left(\begin{array}{ccc} 1& 1 & 1 \\ \gamma_1^{-1/5} & \gamma_2^{-1/5} & \gamma_3^{-1/5} \\ 
\gamma_1^{-2/5} & \gamma_2^{-2/5} &  \gamma_3^{-2/5} \end{array}\right) \left(\begin{array}{c} k_1 \\ k_2 \\ k_3 \end{array}\right) = \left(\begin{array}{c} 1 \\ 1 \\ 1 \end{array}\right).\]
We recognize that the matrix is Vandermonde, and owing to the existence of explicit formulas for the determinant, we are able to solve the system using Cramer's rule as 
\[ \left(\begin{array}{c} k_1 \\ k_2 \\ k_3 \end{array}\right) =\left(\begin{array}{c} \frac{( \gamma_2^{-1/5}-1)( \gamma_3^{-1/5}-1)}{( \gamma_2^{-1/5}-\gamma_1^{-1/5})( \gamma_3^{-1/5}-\gamma_1^{-1/5})} \\  \frac{( \gamma_1^{-1/5}-1)( \gamma_3^{-1/5}-1)}{( \gamma_1^{-1/5}-\gamma_2^{-1/5})( \gamma_3^{-1/5}-\gamma_2^{-1/5})} \\  \frac{( \gamma_1^{-1/5}-1)( \gamma_2^{-1/5}-1)}{( \gamma_1^{-1/5}-\gamma_3^{-1/5})( \gamma_2^{-1/5}-\gamma_3^{-1/5})}\end{array}\right).   \]
Having determined the coefficients $k_j$, it remains to verify that the front solution is positive and to determine conditions on the critical population density $c$.    We return to the question of positivity later and leave the computation of critical $c$ values to the general case.

%

{\bf General Case} Speed $\frac{p}{q}$. We now consider $r>1$ and general rational speeds $s=\frac{p}{q}<1$ with $p$ and $q$ relatively prime.  Our main result is the following.  

\begin{theorem}\label{thm:main} Let $r>1$, and let $s=\frac{p}{q}<1$ with $p$ and $q$ relatively prime.  Then there exists a $m_*(r)$ and functions $c_{max}(r,m)$ and $c_{min}(r,m)$ such that for all $0<m<\mathrm{min}\{1,m_*(r)\}$ and all $c_{min}(r,m)<c<c_{max}(r,m)$,  there exists a positive traveling front solution to (\ref{eq:main}) with speed $s$.
\end{theorem}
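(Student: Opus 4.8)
\textit{Overview.} The plan is to realize the front as the explicit ansatz sketched in Section~\ref{sec:outline}: a semi-infinite block of $1$'s to the left of a moving interface, joined to an exponentially decaying tail built from the slowly decaying solutions of the linearization, and then to verify the three ingredients that make this a genuine front for (\ref{eq:main}) --- the matching conditions at the interface, positivity of the profile, and nonemptiness of the admissible $c$-window.

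\textit{Choice of $m_*(r)$ and the building blocks.} Since $a=\tfrac{rm}{2}\to 0$ as $m\to 0$, the formula $s_{\mathrm{env}}(\gamma)=1-\log(a+b\gamma+a\gamma^2)/\log\gamma$ gives $s_{\mathrm{env}}(\gamma_0)\to\log r/\log(1/\gamma_0)$ for each fixed $\gamma_0\in(0,1)$, which is $<p/q$ once $\gamma_0$ is small; hence $\slin(r,m)\to 0$ as $m\to 0$, and by the monotonicity of Lemma~\ref{lem:mvar} there is $m_*(r)>0$ with $\slin(r,m)<\tfrac pq<1$ for $0<m<\min\{1,m_*(r)\}$. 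Lemmas~\ref{lem:slin} and~\ref{lem:relevantroots} then provide real decay rates $0<\gamma_s<\gamma_{\mathrm{lin}}<\gamma_w<1$ and, setting $\gamma_1:=\gamma_s$, a total of $q-p$ roots $\gamma_1,\dots,\gamma_{q-p}$ of (\ref{eq:mainpoly}) with $|\gamma_j|\le\gamma_s$; the Rouch\'e argument there shows moreover that $\gamma_s$ is the \emph{unique} such root of maximal modulus, and for $m$ small one has $\gamma_j\approx(rm/2)^{q/(q-p)}\zeta_j$ with $\zeta_j$ the $(q-p)$-th roots of unity, so after shrinking $m_*(r)$ we may assume the $\gamma_j$ are distinct. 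We then posit
\[ \phi_i=\begin{cases} 1, & i\le 0,\\[2pt] \displaystyle\sum_{j=1}^{q-p} k_j\gamma_j^{\,i}, & i\ge 1,\end{cases}\]
fix the branch $\gamma_j^{(q-p)/q}:=a+b\gamma_j+a\gamma_j^2$ --- which, since $\gcd(p,q)=1$, determines $\gamma_j^{\,k/q}$ consistently for all $k\in\mathbb Z$ --- and observe that the linear update carries the term $\gamma_j^{\,i-tp/q}$ to $\gamma_j^{\,i-(t+1)p/q}$, so that after $q$ generations integer powers reappear shifted by $p$, which is precisely the content of (\ref{eq:mainpoly}).

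\textit{Matching conditions and the system for $k_j$.} Tracking one period, the interface (at site $\lfloor tp/q\rfloor$ at generation $t$) advances by one at the $p$ generations with $tp\bmod q\ge q-p$ and stalls at the $q-p$ generations with $tp\bmod q\in\{0,1,\dots,q-p-1\}$. At an advancing generation the update imposes no equation on the $k_j$ (the wrap-around of the fractional exponents is absorbed by the root relation), only an inequality on $c$; at a stalling generation the site just ahead of the interface has a parent equal to $1$, and requiring the linear ansatz to reproduce that site's value one generation later reduces --- after writing $a=a\sum_j k_j\gamma_j^{-(tp\bmod q)/q}$ and factoring out $\gamma_j^{-(tp\bmod q)/q}$ --- to the single equation $\sum_j k_j\gamma_j^{-(tp\bmod q)/q}=1$. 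Since $t\mapsto tp\bmod q$ is a bijection of $\{0,\dots,q-1\}$, the stalling generations contribute exactly
\[ \sum_{j=1}^{q-p}k_j\,x_j^{\,\ell}=1,\qquad \ell=0,1,\dots,q-p-1,\qquad x_j:=\gamma_j^{-1/q},\]
a Vandermonde system in the distinct nodes $x_j$ with the unique Lagrange-interpolation solution $k_j=\prod_{i\ne j}(1-x_i)/(x_j-x_i)$, consistent with the $q=3,5$ computations above.

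\textit{Positivity, the $c$-window, and the main obstacle.} Every value of $\phi$ entering the construction --- those at the interface and in intermediate generations included --- has the form $\sum_j k_j x_j^{-m}$ for a positive integer $m$, and with $Q(z)=\prod_j(z-x_j)$ a residue identity gives $\sum_j k_j x_j^{-m}=Q(1)\sum_{k\ge m}[z^k]Q(z)^{-1}$; hence it suffices that the coefficients $[z^k]Q(z)^{-1}$ all carry the sign of $Q(1)$. For $m$ large this is clear, since $x_1=\gamma_s^{-1/q}>1$ is real and strictly smallest in modulus among the $x_j$, so that $\sum_j k_j x_j^{-m}\sim k_1 x_1^{-m}$ with $k_1=\prod_{i\ne1}(1-x_i)/(x_1-x_i)>0$ (the non-real $x_i$ occur in conjugate pairs contributing $|1-x_i|^2/|x_1-x_i|^2>0$, and any real negative $x_i$ contributes a quotient of positive numbers); it remains only to control the finitely many small $m$. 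For the critical density, the capacity pattern is self-consistent exactly when $c$ exceeds every post-migration density at a site meant to stay below capacity and is smaller than every post-migration density at a site meant to reach capacity; taking $c_{\min}(r,m)$ and $c_{\max}(r,m)$ to be the maximum and minimum of these finitely many quantities --- all of which also lie below $1/r$, so $rc\le1$ is respected --- the theorem follows once $c_{\min}(r,m)<c_{\max}(r,m)$. I would establish both remaining points, positivity of the near-interface values and the strict inequality $c_{\min}<c_{\max}$, from the leading-order expansion as $m\to 0$: there the roots cluster at $(rm/2)^{q/(q-p)}$ times the $(q-p)$-th roots of unity and all $k_j\to 1/(q-p)$, so the leading-order contributions to these quantities partially cancel and the next-order term must be carried along. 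I expect this to be the crux of the argument: both assertions amount to pinning down the sign of explicit but delicate symmetric functions of the small roots $\gamma_j$ (equivalently, of roots of unity), and it is here --- and only here --- that the smallness of $m$ is genuinely used.
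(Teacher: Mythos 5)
Your construction is the same as the paper's: the $q-p$ smallest-modulus roots of (\ref{eq:mainpoly}), the branch choice (\ref{eq:gammajrooteqn}), the Vandermonde system (\ref{eq:vander}) with the Lagrange/Cramer formula (\ref{eq:kform}) for the $k_j$, and a $c$-window determined by the post-migration densities at advancing versus stalling generations (your residue identity for $\sum_j k_j\zeta_j^{-n}$ and the uniqueness of the maximal-modulus root $\gamma_s$ are correct, and your max/min definition of $c_{\mathrm{min}},c_{\mathrm{max}}$ legitimately sidesteps identifying the binding constraint). The genuine gap is exactly the point you label the crux and leave unexecuted: positivity of the profile at the near-interface sites and intermediate generations, and the strict inequality $c_{\mathrm{min}}<c_{\mathrm{max}}$. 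Your fallback plan --- push the $m\to0$ expansions of these symmetric functions of the $\gamma_j$ to the next order --- is precisely what the paper's remark after the speed-$\tfrac13$ example warns against: since all $|\gamma_j|$ coalesce to $(rm/2)^{q/(q-p)}$ and all $k_j\to\tfrac1{q-p}$, the leading contributions cancel, and the order of expansion required grows with the exponent being summed; likewise your large-exponent dominance argument ($\sum_j k_j\zeta_j^{-n}\sim k_1\zeta_1^{-n}$) only covers indices beyond a threshold that blows up as $m\to0$, because $|\gamma_2|/\gamma_1\to1$. The paper's missing ingredient is a different mechanism: with $\Gamma_n=\sum_j k_j\gamma_j^{n/q}$, it back-substitutes through the generational recursion (e.g.\ $\Gamma_1=a+b\Gamma_2+a\Gamma_{2+q}$) until each quantity is a polynomial in $a,b$ with positive coefficients plus remainders controlled by $\Gamma_{jq}=o(a^j)$; for $s=\tfrac1q$ this gives $\Gamma_1=\tfrac m2\sum_{j=1}^{q-1}r^j+o(m)$, $\Gamma_2=\tfrac m2\sum_{j=1}^{q-2}r^j+o(m)$, and for general $\tfrac pq$ the bookkeeping with the integers $z_i(w)$ (counting at-capacity parents) and a lexicographic monotonicity yields $0<\Gamma_{n+1}<\Gamma_n$ for all $n$, hence positivity, nonemptiness of the window, and the width scaling $\mathcal{O}(m^p)$ --- none of which your proposal actually establishes.

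Two smaller points. First, you prove everything only for $m$ small and implicitly shrink $m_*(r)$; this meets the literal statement (which only asserts existence of some $m_*(r)$), but the paper works with the natural threshold $\slin(r,m_*)=\tfrac pq$ and closes the range $0<m<\min\{1,m_*(r)\}$ in Section~\ref{sec:largerm} by a continuation/contradiction argument (a front value hitting zero would become strictly positive after one application of (\ref{eq:main}), contradicting $q$-periodicity up to shift; similarly for a first equality $\Gamma_n=\Gamma_{n+1}$), an extension your argument does not reach. Second, your identification of the stalling generations via $tp\bmod q\in\{0,\dots,q-p-1\}$ is shifted relative to the paper's speed-$\tfrac25$ table (where the front stalls at generations $1,2,4$), though the resulting system $\sum_jk_j\zeta_j^\ell=1$, $\ell=0,\dots,q-p-1$, is the correct one; and the side claim that all relevant densities lie below $1/r$ is asserted without proof.
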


The construction  mimics the examples worked out above.  Since $r>1$, the linear spreading speed is well defined.  By Lemma~\ref{lem:mvar}, we have that $s_{\mathrm{lin}}$ is monotone increasing in $m$.  Since $s_{\mathrm{lin}}\to0$ as $m\to 0$, we have that there exists a $m_*(r)\leq 1$ such that  $\frac{p}{q}>s_{\mathrm{lin}}$ for all $m<m_*(r)$.   Then for all $m<m_*(r)$, there exists exactly one real root $\gamma_1$ of (\ref{eq:mainpoly}) satisfying $0<\gamma_1<\gamma_{\mathrm{lin}}$.  By Lemma~\ref{lem:relevantroots}, there exist exactly $q-p$ roots with modulus less than or equal to $\gamma_1$, including the root $\gamma_1$.   Label these roots as $\gamma_j\in\mathbb{C}$.  For each $\gamma_j$, define the root
\be \gamma_j^{\frac{q-p}{q}}=\left(a+b\gamma_j+a\gamma_j^2\right). \label{eq:gammajrooteqn} \ee
Since $p$ and $q-p$ are relatively prime, the remaining roots can be obtained by taking powers of this one.  Now define the front  
\be \phi_i=\left\{ \begin{array}{cc} 1 & i\leq 0 \\ \sum k_j \gamma_j^i & i\geq 1 \end{array}\right..\label{eq:phi} \ee
Let $u_{i,0}=\phi_i$. Then using (\ref{eq:gammajrooteqn}), we calculate formally that 
\[  u_{i,t}=\mathrm{min}\left\{1, \sum k_j\gamma_j^{i-\frac{p}{q}t} \right\}, \]
provided that certain conditions on $c$ and $k_j$ are satisfied.  

Conditions on $k_j$ apply at each lattice site for which a parent lattice site is at capacity.  This occurs at each of the $q-p$ generations during which the front does not advance.  This leads to a system of linear equations that determine $k_j$.  Let 
\[ \zeta_j=\gamma_j^{-1/q}. \]
 The equations for $k_j$ lead to a solvability condition 
\be \left(\begin{array}{cccc} 1 & 1 & \dots & 1 \\ \zeta_1 & \zeta_2 & \dots &\zeta_{q-p} \\
\zeta_1^2 & \zeta_2^2 & \dots &\zeta_{q-p}^2 \\
\vdots & \vdots & \ddots & \vdots \\
\zeta_1^{q-p-1} & \zeta_2^{q-p-1} & \dots &\zeta_{q-p}^{q-p-1} \end{array}\right) \left(\begin{array}{c} k_1 \\ k_2 \\ \vdots \\ k_{q-p} \end{array}\right)= \left(\begin{array}{c} 1 \\ 1 \\ \vdots \\ 1 \end{array}\right).\label{eq:vander} \ee
Using Cramer's rule, the system can be solved explicitly, and we obtain
\be k_j=  \prod_{n\neq j} \frac{ \zeta_n-1}{ \zeta_n-\zeta_j}. \label{eq:kform} \ee

We now have shown that the sum in the definition of the traveling front (\ref{eq:phi}) is well defined.  Note that although the $\gamma_j$ and $k_j$ may be complex, the front is real.  This follows since the roots $\gamma_j$ appear in complex conjugate pairs.  This implies that the $\zeta_j$ also appear in complex conjugate pairs.  In turn, this implies that the column vectors of the matrix in (\ref{eq:vander}) appear in complex conjugate pairs.  Then since the linear combination of these columns prescribed by the $k_j$ is real it follows that the $k_j$ must also appear in complex conjugate pairs and the sum $\sum k_j\gamma_j$ is then a real number.

It remains to determine conditions on the critical population density parameter $c$ that are consistent with the existence of the front.  To do this, note that there are $p$ generations in which the front advances.  During each such generation, the population at that lattice site before reproduction must exceed the value of $c$.  This imposes the condition 
\[ c<\frac{m}{2}+(1-m)\sum k_j \gamma_j^{\tilde{p}/q} +\frac{m}{2}\sum k_j \gamma_j^{(\tilde{p}+q)/q} , \quad 1 \leq \tilde{p} \leq p. \]
The right hand side of this inequality is minimized for $\tilde{p}=p$ (we delay a proof of this fact until the following section), and we therefore define the upper boundary of allowable $c$ values as 
\be c_{\mathrm{max}}(r,m)=\frac{m}{2}+(1-m)\sum k_j \gamma_j^{p/q} +\frac{m}{2}\sum k_j \gamma_j^{(p+q)/q}. \label{eq:cmaxgen} \ee
On the other hand, during each of the $q-p$ generations for which the front does not advance, it is required that the population density is sufficiently small so that the solution does not transition to one.  This means we require
\[ \frac{m}{2}+(1-m)\sum k_j \gamma_j^{\tilde{p}/q} +\frac{m}{2}\sum k_j \gamma_j^{(\tilde{p}+q)/q}<c , \quad p+1 \leq \tilde{p} \leq q . \]
In this case, we present a proof in the following section that  the lower boundary of allowable $c$ values is
\be c_{\mathrm{min}}(r,m)=\frac{m}{2}+(1-m)\sum k_j \gamma_j^{(p+1)/q} +\frac{m}{2}\sum k_j \gamma_j^{(p+q+1)/q}. \label{eq:cmingen} \ee

It remains to validate that $c_{\mathrm{min}}(r,m)<c_{\mathrm{max}}(r,m)$ for all $m<m_*(r)$, as well as the fact that $\phi_i>0$.  We perform this analysis in the following section. 

\section{Front positivity and expansions of locking regions in the small migration limit}\label{sec:positive}

The purpose of this section is two-fold.  We will first restrict to small $m$ and show that  $c_{\mathrm{min}}(r,m)<c_{\mathrm{max}}(r,m)$.  Positivity of the front is also obtained in this process.  Subsequently, these facts will be extended to all $0<m<\mathrm{max}\{1, m^*(r)\}$ using a proof by contradiction.  In doing this, we will have completed the proof of Theorem~\ref{thm:main}.

 In section~\ref{sec:asy}, we derive asymptotic expansions for the roots $\gamma_j$, the terms $\zeta_j$, and the constants $k_j$.  The case of speed $\frac{1}{q}$ fronts is easist, so we begin with this analysis in section~\ref{sec:1q}.  We then extend our results to general rational speeds in section~\ref{sec:pq} and then extend to arbitrary values of $m$ in section~\ref{sec:largerm}.

\subsection{Asymptotic analysis in the small migration limit $m\to 0$}\label{sec:asy}
In this section, we consider the limit as the migration rate tends to zero ($m\to 0$) with the assumption that $r>1$ is held constant.  To leading order, this is equivalent to the limit $a\to 0$.  For most quantities of interest, the first order correction will also match, and so we proceed treating $a$ as a small parameter.  To begin, we require expansions for the $q-p$ roots $\gamma_j$. Let $N=q-p$. Then (\ref{eq:mainpoly}) reads
\[ \gamma^{N} =\left(a+b\gamma+a\gamma^2\right)^q. \]
To leading order, we therefore solve $\gamma^N=a^q$, and expanding further we are able to obtain 
\be \gamma_j= a^{\frac{q}{N}} \left(\omega_j+a^{\frac{p}{N}}\frac{bq}{N}\omega_j^2 +\text{h.o.t.} \right), \label{eq:gammajs} \ee
where $\omega_j$ are the $N$-th roots of unity, given by
\[ \omega_j=e^{\frac{2\pi(j-1)\mbi}{N}}. \]
We now consider $\zeta_j=\gamma_j^{-\frac{1}{q}}$.  To compute $\zeta_j$ and its expansion, we use the expression
\be \zeta_j=\frac{\gamma_j^{\ell_1}}{\left(a+b\gamma_j+a\gamma_j^2\right)^{\ell_2}}, \label{eq:zetaells} \ee
for some positive integers $\ell_1$ and $\ell_2$.  The constants must be chosen to satisfy the Diophantine equation $q\ell_1-N\ell_2=-1$.  Since $N$ and $q$ are relatively prime, we see that this equation has integer solutions.  Furthermore,  using Bezout's identity, we can also surmise  that $0<\ell_1<\ell_2\leq N$.    The following expansion for the $\zeta_j$ holds:
\be \zeta_j=a^{-\frac{1}{N}}\left( \omega_j^{\ell_1}-\frac{b}{N}a^{\frac{p}{N}}\omega_j^{\ell_1+1} +\text{h.o.t.} \right). \label{eq:zetajsina} \ee
Finally, using our expansions for $\zeta_j$, we find that $|\zeta_j-1|\leq \overline{C} a^{-\frac{1}{N}}$, while $|\zeta_j-\zeta_n|\geq \underline{C} a^{-\frac{1}{N}}$, so that 
\be |k_j|\leq C , \label{eq:kbd} \ee
for some $C$ independent of $a$.

{\bf Example} Speed $\frac{1}{3}$.  Recall that in this case $N=q-p=2$, and we will use the two roots of unity $\omega_1=1$ and $\omega_2=-1$.  Using (\ref{eq:gammajs}), we obtain expansions for the roots as follows:
\[ \gamma_1=a^{\frac{3}{2}}+\frac{3}{2}ba^2 +\text{h.o.t.}, \quad \gamma_2=-a^{\frac{3}{2}}+\frac{3}{2}ba^2 +\text{h.o.t.}.  \]
Since $q=3$ and $N=2$ we obtain $\ell_1=1$ while $\ell_2=2$ and using (\ref{eq:zetajsina})  we find
\[ \zeta_1=\frac{1}{\sqrt{a}}-\frac{b}{2} +\text{h.o.t.}, \quad \zeta_2=-\frac{1}{\sqrt{a}}-\frac{b}{2} +\text{h.o.t.}. \]
Next, 
\begin{eqnarray*}
k_1 &=& \frac{\zeta_2-1}{\zeta_2-\zeta_1} =\frac{ -\frac{1}{\sqrt{a}}-1-\frac{b}{2} +\text{h.o.t.}}{-\frac{2}{\sqrt{a}} +\text{h.o.t.}}  = \frac{1}{2}+\frac{1+\frac{b}{2}}{2}\sqrt{a}+\text{h.o.t.}, \\
k_2 &=& \frac{1-\zeta_1}{\zeta_2-\zeta_1} =\frac{ -\frac{1}{\sqrt{a}}+1+\frac{b}{2} +\text{h.o.t.}}{-\frac{2}{\sqrt{a}} +\text{h.o.t.}}= \frac{1}{2}-\frac{1+\frac{b}{2}}{2}\sqrt{a}+\text{h.o.t.}.
\end{eqnarray*}
We now obtain expansions for $c_{\mathrm{min}}(r,m)$ and $c_{\mathrm{max}}(r,m)$.  We use
\[ \gamma_1^{1/3}= \sqrt{a} +\frac{b}{2}a+\text{h.o.t.}, \quad \gamma_2^{1/3}= -\sqrt{a} +\frac{b}{2}a+\text{h.o.t.}, \]
so that 
\[ \sum k_j\gamma_j^{1/3} = (1+b)a+\text{h.o.t.}. \]
Recall  $c_{\mathrm{max}}(r,m)$ and write it in terms of $a$,
\[ c_{\mathrm{max}}(r,m)=  \frac{1}{r}\left(a+b\sum k_j\gamma_j^{1/3}+a \sum k_j \gamma_j^{4/3}\right).  \]
A naive inspection of the formulas for $k_j$ and $\gamma_j^{1/3}$ would suggest that that middle term should dominate, and we would expect a leading order expansion in terms of $\sqrt{a}$.  However, due to cancellation we instead find the expansion 
\[ c_{\mathrm{max}}(r,m)=\left(\frac{1}{2}+\frac{r}{2}+\frac{r^2}{2}\right)m +o(m) .\]
On the other hand, we have 
\[ \gamma_1^{2/3}=a+ba^{3/2}+\text{h.o.t.}, \quad  \gamma_2^{2/3}=a-ba^{3/2}+\text{h.o.t.},  \] 
and so we have the expansion
\[ c_{\mathrm{min}}(r,m) = \left(\frac{1}{2}+\frac{r}{2}\right)m+o(m). \] 
In particular, the width of the $\frac{1}{3}$ speed locking region is $\mathcal{O}(m)$ as $m\to 0$;  see Figure~\ref{fig:tongueasy}.

\begin{rmk} While we have already established positivity of the front in this case, we note that the leading order expansions of $k_j$ and $\gamma_j$ are insufficient to verify positivity of the front due to cancellation. This turns out to be true for general speeds $\frac{p}{q}$, and so we will need to adopt a different approach to show that the front is positive.
\end{rmk}

\subsection{Scalings of the locking region for the case $s=\frac{1}{q}$} \label{sec:1q}
It turns out that leading order scalings for $c_{\mathrm{min}}(r,m)$ and $c_{\mathrm{max}}(r,m)$ can be attained in a simpler fashion than the direct  method employed in the previous example.  We demonstrate how this works in the simplest case of $s=1/q$ and return to the general case in the next sub-section.  To simplify notation, let
\[\Gamma_n=\sum k_j\gamma_j^{n/q}. \]
Then $\phi_1=\Gamma_q$, and we can write the front solution over all $q$ generations in terms of $\Gamma_n$ (again for $s=\frac{1}{q}$) as
\[\begin{array}{ccccc}
\text{Lattice Site} & i=0 & i=1 & i=2 & i=3 \\
\text{Generation 0 } & 1 & \Gamma_{q} & \Gamma_{2q} & \Gamma_{3q} \\
\text{Generation 1 } &1 & \Gamma_{q-1} & \Gamma_{2q-1} & \Gamma_{3q-1} \\ 
\text{Generation 2 } &1 & \Gamma_{q-2} & \Gamma_{2q-2} & \Gamma_{3q-2} \\
\vdots &  \vdots  & \vdots & \vdots   &  \vdots  \\
\text{Generation $q-2$ } &1 & \Gamma_{2} & \Gamma_{2+q} & \Gamma_{2+2q} \\
\text{Generation $q-1$ } &1 & \Gamma_{1} & \Gamma_{1+q} & \Gamma_{1+2q} \\
\text{Generation $q$ } &1 & 1 & \Gamma_{q} & \Gamma_{2q}  \\
\end{array} \]
We can then re-express 
\[ c_{\mathrm{max}}(r,m)=\frac{m}{2}+(1-m)\Gamma_1 +\frac{m}{2}\Gamma_{1+q}, \]
and 
\[ c_{\mathrm{min}}(r,m)=\frac{m}{2}+(1-m)\Gamma_2 +\frac{m}{2}\Gamma_{2+q}. \]
Consulting (\ref{eq:gammajs}), we observe that 
\[ \Gamma_n=\O(m^{\frac{n}{N}}),\]
and consequently $\frac{m}{2}\Gamma_{1+q}$ and $\frac{m}{2}\Gamma_{2+q}$ are (at most) $\O(m^2)$. Therefore, to show that $c_{\mathrm{min}}(r,m)<c_{\mathrm{max}}(r,m)$ as $m\to 0$, we must show that $\Gamma_2<\Gamma_1$ in this limit.  Formally, this turns out be quite easy, as we note that these two quantities are related via 
\[ \Gamma_1=a+b\Gamma_2+a\Gamma_{2+q},\]
where we note that $\Gamma_{2+q}=\O(a)$ and $b>1$ so that $\Gamma_2<\Gamma_1$.  Of course, this relies on $\Gamma_2$ being positive, and so in order to make this argument rigorous (for small $m$), we must iterate this procedure to express $\Gamma_1$ and $\Gamma_2$ in terms of the quantities in the zeroth generation, where we recall that all $\Gamma_{jq}$ are $o(m)$.  We proceed as follows:
\begin{eqnarray*}
\Gamma_1&=& a+b\Gamma_2+a\Gamma_{2+q} \\
 &=& a+b\Gamma_2 +o(a) \\
&=& a+b(a+b\Gamma_3+a\Gamma_{3+q}) +o(a) \\
 &=& a+ba+b^2\Gamma_3 +o(a) \\
&\dots& \\
 &=& a+ba+b^2a+b^3a+\dots+b^{q-2}a +o(a).
\end{eqnarray*}
We have therefore obtained (after expressing $a$ and $b$ in terms of $r$ and $m$) that 
\[ \Gamma_1=\frac{m}{2} \sum_{j=1}^{q-1} r^j +o(m), \]
while 
\[ \Gamma_2=\frac{m}{2} \sum_{j=1}^{q-2} r^j +o(m),\]
from which we have $\Gamma_2<\Gamma_1$ and therefore $c_{\mathrm{min}}(r,m)<c_{\mathrm{max}}(r,m)$ for $m$ sufficiently small.   Repeating the argument above we can also show that $\Gamma_{n+1}<\Gamma_n$ for all $n$ therby validating our choice of $\Gamma_2$ in the formula for $c_{\mathrm{max}}(r,m)$.  Note also that a similar argument allows us to write the front $\phi_1=\Gamma_q=a\Gamma_1+o(a)$, and this implies positivity of the front itself.  

Finally, note that scalings for $c_{\mathrm{min}}(r,m)$ and $c_{\mathrm{max}}(r,m)$ are then obtained with leading order expansions
\[ c_{\mathrm{min}}(r,m)=\frac{m}{2} \sum_{j=0}^{q-2} r^j +o(m), \quad c_{\mathrm{max}}(r,m)=\frac{m}{2} \sum_{j=0}^{q-1} r^j +o(m). \]
A comparison between these expansions and the locking regions determined in Section~\ref{sec:lockedfronts} are shown in Figure~\ref{fig:tongueasy}.

\begin{figure}[!t]
\centering
\includegraphics[width=0.5\textwidth]{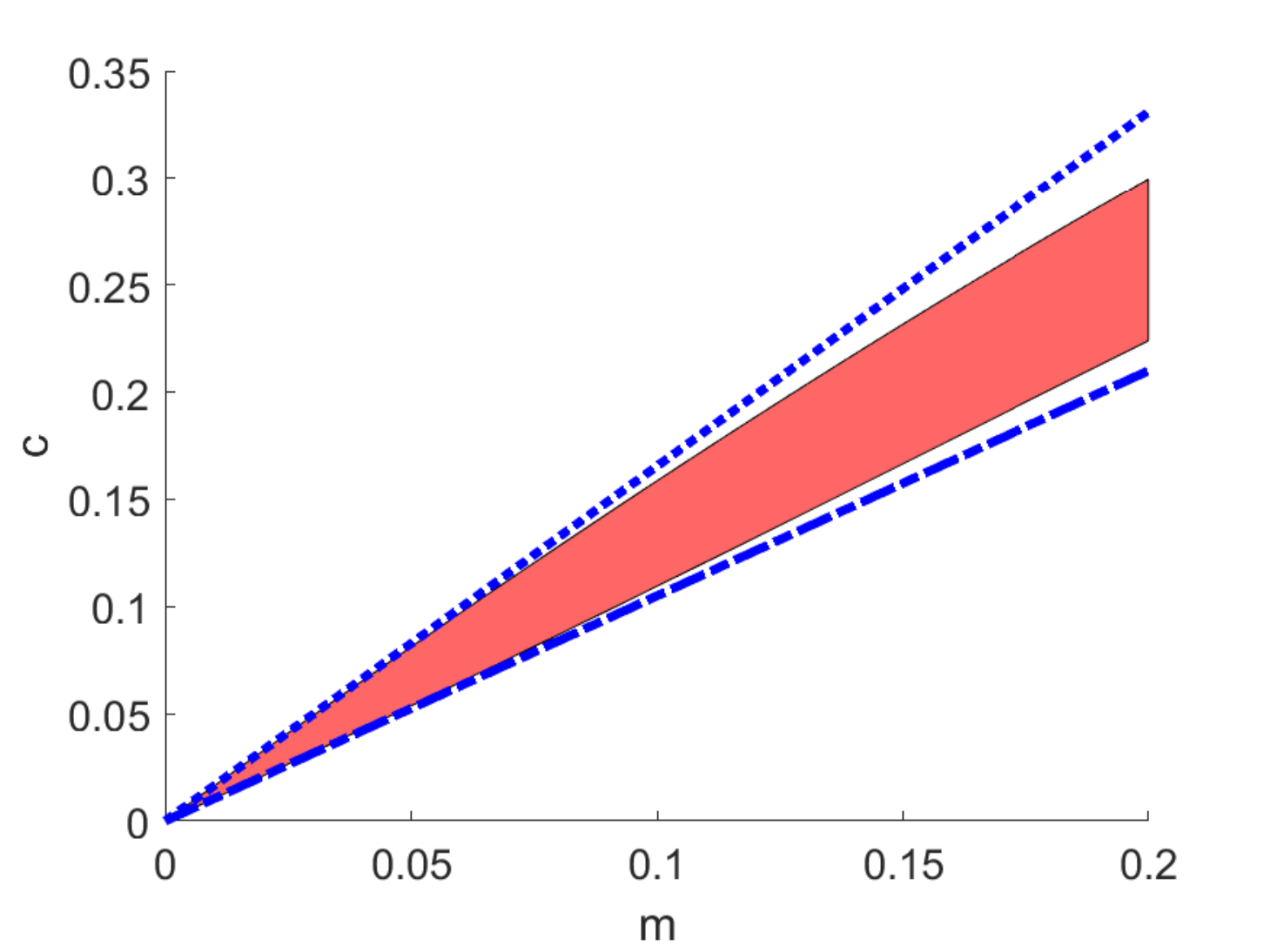}\includegraphics[width=0.5\textwidth]{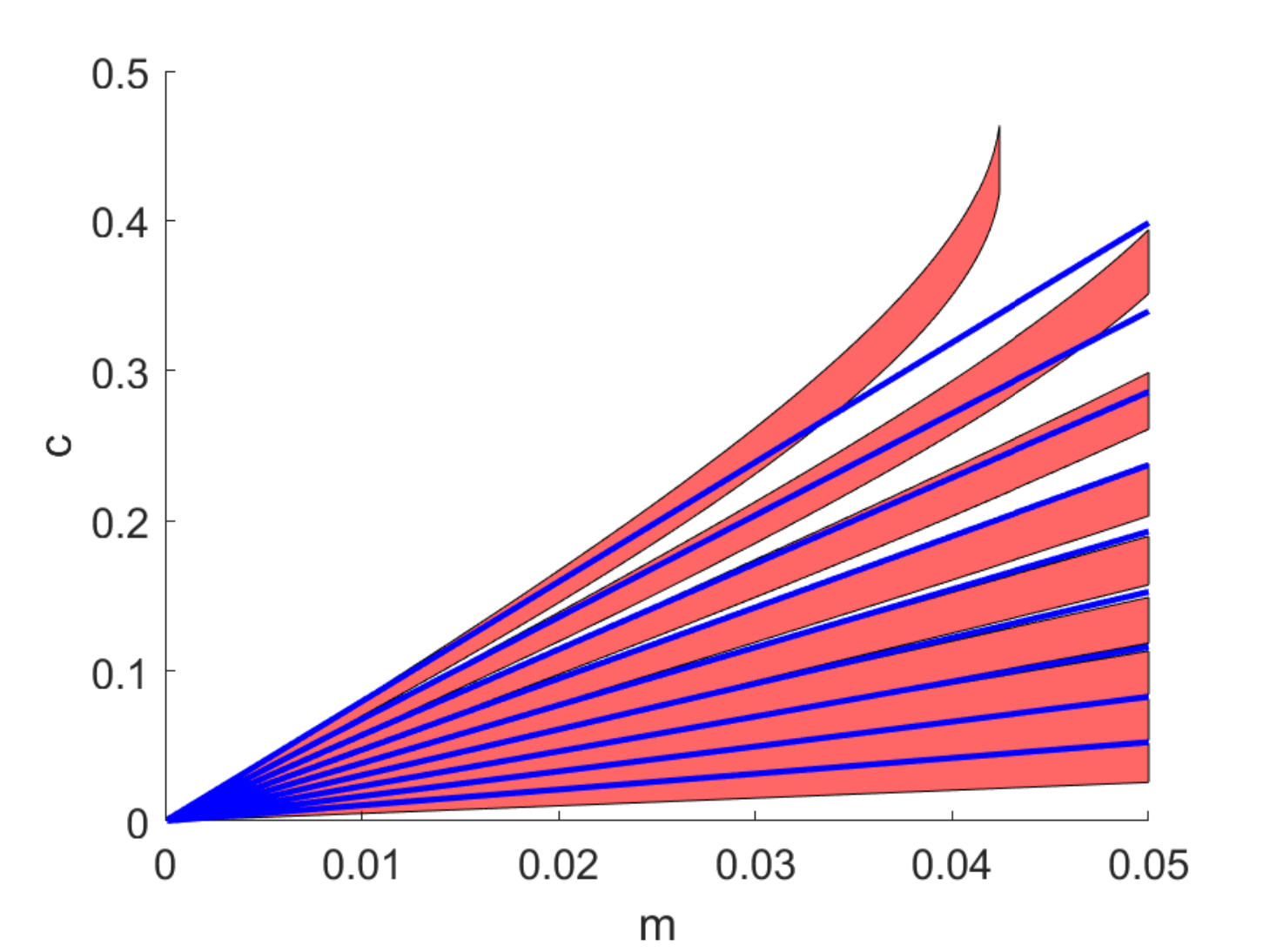}
\caption{Locking regions for speed one third (left) and speed $s=\frac{1}{q}$ for $q$ between two and ten (right).  The red shaded region are numerically computed using the formulas $c_{\mathrm{min}}(r,m)$ and  $c_{\mathrm{max}}(r,m)$; see (\ref{eq:cmingen}) and (\ref{eq:cmaxgen}).  The blue (dashed) lines depict leading order asymptotic expansions in the limit as $m\to 0$. }
\label{fig:tongueasy}
\end{figure}

\subsection{Scaling of locking regions for the general case $s=\frac{p}{q}$} \label{sec:pq}
We now consider the general case. We will obtain expansions for $\Gamma_p$ and $\Gamma_{p+1}$, establishing that $\Gamma_{p+1}<\Gamma_p$ for $m$ sufficiently small.  We start with $\Gamma_p$.  Once again, the goal is to express $\Gamma_p$ in terms of the population values in the previous generations and iterating the procedure to eventually obtain $\Gamma_p$ in terms of only $a$, $b$ and $\Gamma_{jq}$.  

For fixed $p$ and $q$, consider an integer $w$ with $1\leq w <q$ and then define the integers $z_i(w)\geq 0$ and $\eta_i(w)\in \{ 0, 1,2,\dots,p-1\}$ as follows:
\begin{eqnarray*}
q&=&z_1(w) p+w+\eta_1(w), \\
2q&=& z_2(w)p+z_1(w)p+w+\eta_2(w), \\
&\vdots& \\
pq &=& (z_p(w)+z_{p-1}(w)+\dots+z_1(w))p+w+\eta_p(w) .
\end{eqnarray*}
For fixed $w$, the $\eta_i$ are in fact a permutation of the integers $\{0, 1,2,\dots,p-1\}$.  Note also that if $w=p$,  then $\eta_p=0$ and  $(z_p+z_{p-1}+\dots+z_1+1)=q$.  
%
%

{\bf Example} Speed $\frac{3}{8}$.
We now work out an example that will illustrate the general argument.  For $s=\frac{3}{8}$, we will need to show that $\Gamma_4<\Gamma_3$.  In this example, we have
\be 8=3+3+2, \quad 16=3\cdot 3+3+3+1, \quad 24=3\cdot 3+3\cdot 3+3+3, \label{eq:z38} \ee
so that $z_1(3)=1$, $z_2(3)=3$, and $z_3(3)=3$, while $\eta_1(3)=2$, $\eta_2(3)=1$, and $\eta_3(3)=0$. In the left hand panel of the table below, we sketch the front solution over $q=8$ generations, ending with the one containing $\Gamma_3$.  Tracing the dependence on $a$ backwards through generations, we see that there is exactly $z_1=1$ lattice site at capacity located one lattice site to the left of $\Gamma_3$; there are $z_2=3$ lattice sites at capacity located two lattice sites to the left of $\Gamma_3$, and there are $z_3=3$ lattice sites at capacity located three lattice sites to the left of $\Gamma_3$.  In this way, we can recursively express $\Gamma_3$ in terms of its predecessors until $\Gamma_3$ is expressed as some function of $a$, $b$, and $\Gamma_{8j}$.  Since $\Gamma_{8j}=o(a^j)$, this estimate is sufficient to obtain an expansion for $\Gamma_3$ valid to $\O(a^3)$.   
\[\begin{array}{ccccc} 
1 & \Gamma_8 & * & * & \\
1 & \Gamma_5 &* & * & \\
1 & \Gamma_2 & * & * & \\
1 & 1 & \Gamma_{7} &* & \\
1 & 1 & \Gamma_{4} &* & \\
1 & 1& \Gamma_{1} & *& \\
1 & 1 &1 & \Gamma_6& \\
1 & 1 & 1 & \Gamma_3 & \\
\end{array} \quad\quad\quad \quad \quad 
\begin{array}{ccccc} 
\Gamma_1 & * & * & * & \\
1 & \Gamma_6 &* & * & \\
1 & \Gamma_3 & * & * & \\
1 & 1 & \Gamma_{8} &* & \\
1 & 1 & \Gamma_{5} &* & \\
1 & 1& \Gamma_{2} & *& \\
1 & 1 &1 & \Gamma_7& \\
1 & 1 & 1 & \Gamma_4 & \\
\end{array} 
 \] 

Now consider $\Gamma_4$.  Expanding as in (\ref{eq:z38}), we find
\[ 8=3+4+1, \quad 16=3\cdot 3+3+4+0, \quad 24=2\cdot 3+3\cdot 3+3+4+2. \]
Thus, $z_1(4)=1$ and $z_2(4)=3$, while $z_3(4)=2$.  The implication is that the predecessors of $\Gamma_4$ one lattice site to the left have $z_1(4)=1$ lattice site at capacity, while those two lattice sites to the left have $z_2(4)=3$ lattice sites at capacity.  The key difference is that $z_3(4)<z_3(3)$.  Therefore, to obtain a $\O(a^2)$ expansion for $\Gamma_3$ and $\Gamma_4$, we could work backwards $z_1+z_2=4$ generations and find that each could be written as a common function of $a$ and $b$, as well as a sum of other terms all dependent on $\Gamma_j$ with $j\geq 7$, and all $o(a^2)$.  Therefore, the expansions would agree to $\O(a^2)$.  To continue to $\O(a^3)$, however, we see that $\Gamma_4$ has one fewer at-capacity lattice sites to draw from, and its value will therefore be necessarily smaller.  We remark that a similar analysis could be performed to verify that $\Gamma_{n_1}\leq \Gamma_{n_2}$ for any $n_1>n_2$, thereby validating that our expressions for $c_{\mathrm{min}}$ and $c_{\mathrm{max}}$ are well-defined.

{\bf General Case} Speed $\frac{p}{q}$.
We will focus on $\Gamma_p$ and $\Gamma_{p+1}$ and show that $\Gamma_{p+1}<\Gamma_p$ for $m$ sufficiently small.  In the process, we will obtain that $\Gamma_p-\Gamma_{p+1}=\O(m^p)$ so that the width of the locking region is proportional to $m^p$.  

Define $z_i(p)$ and $\eta_i(p)$ as above and note that $\eta_p(p)=0$ and $1+z_1(p)+\dots+z_p(p)=q$.  Recall that $z_i(p)$ specifies the number of lattice sites a distance $i$ to the left of $\Gamma_p$ which are at capacity for previous generations of the front evolution.  Since all $\eta_i(p)>1$ for $i<p$, we then note that for any $j<p$,
\[ jq=z_j(p)p+\dots+z_1(p)p+p+1+\eta_i(p)-1, \]
so that $z_j(p+1)=z_j(p)$ and $\eta_i(p+1)=\eta_i(p)-1$ for any integer $1\leq j<p$.  For $j=p$, since $\eta_i(p)=0$, we must instead write
\[ pq=(z_p(p)-1)p+z_{p-1}(p)p+\dots +z_1(p)p +p+1 +p-1, \]
so that $z_p(p+1)=z_p(p)-1$.  

Therefore, tracking both $\Gamma_p$ and $\Gamma_{p+1}$ backwards $q$ generations, we find that there exists a function $\Phi_q(a,b,z_1,z_2,\dots,z_p)$ and linear maps $\Lambda_j$
such that 
\[ \Gamma_p=\Phi_q(a,b,z_1(p),\dots,z_p(p))+\Lambda_p\left(\Gamma_{q},\Gamma_{2q},\dots,\Gamma_{q(q-p)}\right). \] 
Furthermore, by (\ref{eq:gammajs}) we have that $\Gamma_{jq}=o(a^j)$, so that $\Lambda_p$ provides a contribution that is $o(a^p)$.  We also have that 
\[ \Gamma_{p+1}=\Phi_q(a,b,z_1(p+1),\dots,z_p(p+1))+\Lambda_{p+1}\left(\Gamma_{1},\Gamma_{1+q},\dots,\Gamma_{q(q-p+1)+1}\right). \]

Consider $\mathbf{z},\mathbf{w}\in (\mathbb{Z}^+)^p$  and let $\mathbf{z}\prec\mathbf{w}$ be the lexicographic ordering, where $\mathbf{z}\prec\mathbf{w}$ means $z_j<w_j$ for some $j$ while $z_k=w_k$ for all $1\leq k<j\leq p$.  Then for $a$ sufficiently small, $\Phi_p(a,b,\mathbf{z})$ is monotone increasing with respect to $\prec$.  Then since $\mathbf{z}(p+1)\prec \mathbf{z}(p)$, we have that $\Gamma_{p+1}<\Gamma_p$.  This argument can be generalized to show that $0<\Gamma_{n+1}< \Gamma_n$ for all $n\geq 1$.  This justifies our choice of of $\Gamma_j$ in the definitions of $c_{\mathrm{min}}(r,m)$ and $c_{\mathrm{max}}(r,m)$, proves that the interval $(c_{\mathrm{min}}(r,m),c_{\mathrm{max}}(r,m))$ is nonempty for fixed $r$ and small $m$ and guarantees positivity of the front (again for small $m$).  We also obtain that the width of the speed $s=\frac{p}{q}$ locking region scales with $\O(m^p)$.

\subsection{Extension to larger values of $m$}   \label{sec:largerm}

Let $m$ be sufficiently small and select parameters $r$ and $c$ so that the existence of a positive front with speed  $s=\frac{p}{q}$ is guaranteed. We now increase $m$ and show that positivity is preserved.  We argue by contradiction and assume that we can change parameters continuously so that we remain within the  speed $\frac{p}{q}$ locking region.  This is done until a set of parameters $(c,r,m)$ is reached at which  the front attains a zero value at one or more lattice sites.  Suppose for the moment that this occurs at a single lattice site.  Then one generation later, since the coefficients in (\ref{eq:main}) are positive, it must be the case that the value of the front at all lattice sites is positive.  This holds for all subsequent iterations, and so it is not possible for $q$ iterations of the (\ref{eq:main}) to return some lattice site to zero.  A similar argument works if more than one lattice site attains a zero value, even if the number of said lattice sites is not finite.  Finally, it is not possible for all lattice sites to attain zero simultaneously for a front with speed $s<1$.  This establishes positivity of the front for all parameters within the speed $\frac{p}{q}$ locking region. 

In a similar fashion, we can demonstrate that for fixed $r$ and any $0<m<\mathrm{min}(1,m^*(r))$ it holds that $c_{\mathrm{min}}(r,m)<c_{\mathrm{max}}(r,m)$.  This amounts to showing that $\Gamma_n>\Gamma_{n+1}$ for all $n$.  This holds for $m$ sufficiently small by the analysis in the previous subsection.  Now increase $m$ while keeping $c>c_{\mathrm{min}}(r,m)$.  The front solution will be well defined so long as $m$ remains below $\mathrm{min}(1,m^*(r))$ and $c_{\mathrm{max}}(r,m)>c_{\mathrm{min}}(r,m)$.   Suppose that in doing so $\Gamma_n=\Gamma_{n+1}$ for some value of $r$ and $m$.  Each of these quantities can then be expressed in terms of values of $\Gamma_j$ taken from the previous generation.  Since $a$ and $b$ are positive it holds that $\Gamma_n=\Gamma_{n+1}$ for the first time  if and only if $\Gamma_{n+p}=\Gamma_{n+p+1}$, $\Gamma_{n+p+q}=\Gamma_{n+p+q+1}$ and $\Gamma_{n+p-q}=\Gamma_{n+p-q+1}$ (if $n+p-q>0$).   If $n+p-q=0$ this yields a contradiction as we would then require $1=\Gamma_1$.  If $n+p-q\neq 0$ then we can continue this process to write $\Gamma_{n+p-q}$ and $\Gamma_{n+p-q+1}$ in terms of their predecessors until such a contradiction is obtained.

\section{Spectral Stability}\label{sec:stab}
In this section, we establish (strict)  spectral stability of the locked fronts constructed in previous sections.  Spectral stability (in weighted spaces) is a prerequisite for emergence of the front, and our analysis here will also substantiate our choice of  the $q-p$ steepest decaying terms $\gamma_j$ to include in the front construction.  

Consider a locked front with rational speed $s=\frac{p}{q}$. We follow \cite{chow95}; see also \cite{chow98,turzik08}.  Consider the Banach space $X=\ell^\infty(\mathbb{Z})$ with the supremum norm.   Let $G:X\to X$ be the generational map defined by (\ref{eq:main}).  Let $S:X\to X$ be the left shift operator defined by $(Su)_j=u_{j+1}$.  Locked fronts with speed $s=\frac{p}{q}$ are therefore fixed points of the map 
\[ \F(u)=S^{(p)} G^{(q)}(u). \]
We will linearize this map at the traveling front and study its spectrum.  We will fix ideas using a specific case and then generalize.   

{\bf Example} Speed $\frac{1}{2}$.  Let us begin with the simplest case of speed $s=\frac{1}{2}$.  Let $\phi$ be a locked front solution.  Since $N=1$, there is one relevant root of (\ref{eq:mainpoly}), and  we see that the front is described by the function
\[ \phi_i=\left\{ \begin{array}{cc} 1 & i\leq 0 \\ \gamma_1^i & i\geq 1 \end{array}\right. .\]
Next, we set $u=\phi+\eta$ and linearize $\F$ near the front.  For $i\leq 0$, due to the fact that $g'(1)=0$, we have that $(D\F(\phi)\eta)_i=0 $.  For any $i>1$, the linearization is the same as that of the constant state at zero, namely, 
\[ (D\F(\phi)\eta)_i = a^2\eta_{i-1}+2ab\eta_i+(b^2+2a^2)\eta_{i+1}+2ab\eta_{i+1}+a^2\eta_{i+2},  \]
while at the remaining value of $i=1$, we have 
\[ (D\F(\phi)\eta)_1 =2ab\eta_1+(b^2+2a^2)\eta_{2}+2ab\eta_{3}+a^2\eta_{4}. \]
Following \cite{chow95}, the spectrum of $D\F$ can be described in terms of its Fredholm properties and decomposed into continuous essential spectrum $\sigma_{\mathrm{ess}}(D\F)$ and point spectrum $\sigma_{\mathrm{pt}}(D\F)$, consisting of isolated eigenvalues of finite multiplicity.

The boundary of the essential spectrum is given in terms of two curves, which can be derived from the asymptotic operators near the homogeneous states zero and one.  Since the linearization near the stable state one is simply zero, this portion of the essential spectrum merely consists of the point at zero.  For the unstable zero state, we compute
\[ \partial\sigma_{\mathrm{ess}}(D\F)=\{ \lambda\in \mathbb{C} \ | \ \lambda= a^2e^{-\mbi k} +2ab+(b^2+2a^2)e^{\mbi k} +2abe^{2\mbi k}+a^2e^{3\mbi k}, \ k\in\mathbb{R}\}. \]
Since $a$ and $b$ are both positive, the most unstable portion of this curve occurs when $k=0$ and $\lambda=r^2$, reflecting the pointwise instability of the zero state with growth rate $r$ and the fact that $\F$ consists of the evolution over two generations.  It is important to note that this uniform growth is not observed if the perturbations are sufficiently localized in space.  We will employ exponential weights to control the decay of the perturbation and study the subsequent  impact on the spectrum.  To this end, suppose that the perturbation $\eta$ is localized so that $\sup_{i>0} \eta_i \bg^{-i} <\infty$ for some weight  $0<\bg<1$.  Consider the weighted space 
$X_\bg$ with norm $||u||_\bg=\sup u_i w_i$, where $w_i=\bg^{-i}$ for $i>0$ and $w_i=1$ otherwise.

 Then the boundary of the  essential spectrum associated to $D\F$ in the weighted space becomes
\[ \partial\sigma_{\mathrm{ess},\bg}(D\F)=\{ \lambda\in \mathbb{C} \ | \ \lambda= \frac{1}{\bg}a^2e^{-\mbi k} +2ab+(b^2+2a^2)\bg e^{\mbi k} +2ab\bg^2e^{2\mbi k}+a^2\bg^3 e^{3\mbi k}, \ k\in\mathbb{R}\}. \]
The most unstable point again occurs for $k=0$, where
\[ \lambda_{\mathrm{max}}= \frac{(a+b\bg+a\bg^2)^2}{\bg}. \]
Recall the values $\gamma_s=\gamma_1$ and $\gamma_w$ from Lemma~\ref{lem:relevantroots} that describe the strong and weak decay rates.   Also note that the right hand side of the previous equation is convex.  If we were to select the weight $\bg$ to be $\gamma_1=\gamma_s$, then we  would have that $\lambda_{\mathrm{max}}=1$, while for weight $\bg$ chosen as $\gamma_w$, we also have that $\lambda_{\mathrm{max}}=1$. Due to convexity, it follows that for any choice of weight between $\gamma_1=\gamma_s$ and $\gamma_w$, we have that the essential spectrum lies within the unit disk in the complex plane and is therefore stabilized. 

We now show that there is no unstable point spectrum.  To do so, we seek solutions to the eigenvalue equation $D\F(\phi)\eta=\lambda \eta$ for some $|\lambda|\geq 1$.   Since the linearization is zero for $i\leq 0$, we quickly obtain $\eta_i=0$ there.  For $i\geq 1$, we have 
\begin{eqnarray}
\lambda \eta_1&=&2ab\eta_1+(b^2+2a^2)\eta_{2}+2ab\eta_{3}+a^2\eta_{4},  \nonumber  \\
\lambda \eta_i&=& a^2\eta_{i-1}+2ab\eta_i+(b^2+2a^2)\eta_{i+1}+2ab\eta_{i+1}+a^2\eta_{i+2}, \quad i>1. \label{eq:sonehalfevalue} 
\end{eqnarray}
We will attempt to build eigenfunctions using a shooting method.  The first equation in (\ref{eq:sonehalfevalue}) can be solved for $\eta_4$, yielding a three dimensional shooting manifold.    
The second equation can be re-expressed as a difference equation satisfying 
\be \left(\begin{array}{c} \eta_{i+1} \\ \eta_{i+2} \\ \eta_{i+3} \\ \eta_{i+4} \end{array}\right) = \left(\begin{array}{cccc} 0 & 1 & 0 & 0 \\ 0 & 0 & 1 & 0 \\ 0 & 0 & 0 & 1 \\ -1 & -2\frac{b}{a} -\frac{\lambda}{a^2} & -\frac{b^2+2a^2}{a^2} & -2\frac{b}{a} \end{array}\right) \left(\begin{array}{c} \eta_{i} \\ \eta_{i+1} \\ \eta_{i+2} \\ \eta_{i+3} \end{array}\right).  \label{eq:diff} \ee
The characteristic polynomial for this dynamical system is 
\be (a+b\gamma+a\gamma^2)^2-\lambda \gamma=0. \label{eq:charpol} \ee
When $\lambda=1$, this polynomial is exactly (\ref{eq:mainpoly}), and there are four roots, with only $\gamma_1$ small enough so that the solution remains in $X_\gamma$.  For other values of $\lambda$ with $|\lambda|\geq 1$, the polynomial (\ref{eq:charpol}) can be rewritten as 
\[ \gamma=\frac{1}{\lambda} (a+b\gamma+a\gamma^2)^2, \]
and since the modulus of the right hand side is diminished when $|\lambda|\geq 1$, we can extend the argument using Rouche's Theorem from Lemma~\ref{lem:relevantroots} to show that there remains a unique root $\gamma_1(\lambda)$ with $|\gamma_1(\lambda)|\leq \gamma_1(1) $.  The eigenvector associated to this eigenvalue is, upon consulting (\ref{eq:diff}), given by $(1, \gamma_1(\lambda), \gamma_1^2(\lambda), \gamma_1^3(\lambda))^T$.   

To recap, we have shown that there is a three dimensional shooting manifold for which, if $\lambda$ is to be an eigenvalue,   must coincide with the one dimensional (strong) stable manifold of (\ref{eq:diff}).  However, since $\eta_0=0$, it turns out that we must have 
\[ \left(\begin{array}{c} 0 \\ \eta_1 \\ \eta_2 \\ \eta_3\end{array}\right) \in \mathrm{Span}\left\{  \left(\begin{array}{c} 1 \\ \gamma_1(\lambda) \\ \gamma_1^2(\lambda) \\ \gamma_1^3(\lambda) \end{array}\right)\right\}, \]
which is clearly not possible (aside from the trivial solution).  We have thus ruled out unstable (or marginally unstable) point spectrum.  In combination with our bounds on the essential spectrum in the weighted space $X_\bg$, we have therefore demonstrated strict spectral stability of the locked front propagating with speed one-half.

{\bf General Case} Speed $\frac{p}{q}$.
For general locked fronts of speed $\frac{p}{q}$, the method above can be adapted to once again yield stability.  We have the following result.  

\begin{theorem} \label{thm:stable} Fix $r>1$ and for $s=\frac{p}{q}<1$ let $\phi_i$ be the traveling front constructed in Theorem~\ref{thm:main}.  Then there exists a $0<\bg<1$ such that the front is spectrally stable in $X_\bg$.  
\end{theorem}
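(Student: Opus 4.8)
The plan is to adapt the speed $\tfrac12$ computation and decompose the spectrum of $L:=D\F(\phi)$ on a weighted space $X_\bg$ into essential and point spectrum via the Fredholm theory of \cite{chow95}, showing that each lies strictly inside the unit disk for a suitable weight $\bg\in(\gamma_s,\gamma_w)$. First I would record the structure of $L$. Because $g'\equiv 0$ on the flat branch $u\ge c$, the last of the $q$ factors composing $DG^{(q)}(\phi)$ annihilates perturbations at every lattice site that is at capacity at generation $q$; since at generation $q$ the front is $\phi$ shifted $p$ sites to the right, this gives $(L\eta)_i=0$ for all $i\le 0$. For $i$ far ahead of the interface all ancestors lie in the linear regime, so there $L$ is the constant-coefficient operator with symbol $z^p\mu(z)^q=z^{p-q}(a+bz+az^2)^q$, where $\mu(z)=az^{-1}+b+az$; in the finite transition window between the two regimes the equations are modified exactly as in the $\eta_4$ equation of the speed $\tfrac12$ example.

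For the essential spectrum, the asymptotic operator at $-\infty$ is identically zero and contributes only $\{0\}$, while at $+\infty$ in the weight $\bg$ the boundary curve is $\{(\bg e^{\mbi k})^{p-q}(a+b\bg e^{\mbi k}+a\bg^2 e^{2\mbi k})^q:k\in\mathbb{R}\}$. Since $a,b>0$ the maximum modulus on this curve is attained at $k=0$ and equals $\lambda_{\max}(\bg)=(a+b\bg+a\bg^2)^q/\bg^{q-p}$. Using $s_{\mathrm{env}}(\gamma)=1-\log(a+b\gamma+a\gamma^2)/\log\gamma$ one finds $\log\lambda_{\max}(\gamma)=q\big(\tfrac pq-s_{\mathrm{env}}(\gamma)\big)\log\gamma$, which is negative for every $\gamma\in(\gamma_s,\gamma_w)$ since $\log\gamma<0$ and $s_{\mathrm{env}}(\gamma)<\tfrac pq$ strictly on that interval by Lemma~\ref{lem:slin}. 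Fixing any $\bg\in(\gamma_s,\gamma_w)$ thus confines the essential spectrum strictly inside the unit disk.

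For the point spectrum I would show that $L\eta=\lambda\eta$ with $|\lambda|\ge1$ and $\eta\in X_\bg$ forces $\eta\equiv0$. From $(L\eta)_i=0$ for $i\le0$ we get $\eta_i=0$ there, and for $i\ge1$ the perturbation solves the order-$2q$ linearized recursion with characteristic polynomial $P_\lambda(z)=(a+bz+az^2)^q-\lambda z^{q-p}$; note $P_1(z)=0$ is precisely (\ref{eq:mainpoly}). Comparing the two terms of $P_\lambda$ on $|z|=\bg$ — where $|(a+bz+az^2)^q|\le(a+b\bg+a\bg^2)^q<\bg^{q-p}\le|\lambda|\bg^{q-p}$ — and extending the Rouch\'e argument of Lemma~\ref{lem:relevantroots} shows that $P_\lambda$ has exactly $q-p$ roots $z_1(\lambda),\dots,z_{q-p}(\lambda)$ in $|z|<\bg$ and none on $|z|=\bg$; for $\lambda=1$ these are exactly the roots used in the front construction. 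Consequently the solutions of the recursion lying in $X_\bg$ are precisely those with $\eta_i=\sum_j\tilde k_j z_j(\lambda)^i$ for $i\ge1$. Imposing the matching conditions at the $q-p$ generations where the front does not advance — the linearization of the matching conditions of Section~\ref{sec:lockedfronts}, in which an at-capacity neighbour now contributes $a\cdot 0$ rather than $a\cdot 1$ — yields the homogeneous Vandermonde system $V(\lambda)\tilde{\mathbf k}=\mathbf 0$ with entries $\zeta_j(\lambda)^n$, $\zeta_j(\lambda)=z_j(\lambda)^{-1/q}$, $0\le n\le q-p-1$. Since the $z_j(\lambda)$ are distinct and nonzero ($P_\lambda(0)=a^q\ne0$) this matrix is invertible, so $\tilde{\mathbf k}=\mathbf 0$ and $\eta\equiv0$; if two decaying roots coincide at some $\lambda$, one replaces the Vandermonde by its confluent form with no change in conclusion. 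Together with the essential-spectrum bound this gives $\sigma(L)\subset\{|\lambda|<1\}$ in $X_\bg$.

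The main obstacle is this last step: carrying out the transition-window bookkeeping so that the eigenfunction is indeed globally of the form $\sum_j\tilde k_j z_j(\lambda)^i$ on $i\ge1$, that the matching at the $q-p$ non-advancing generations produces exactly the homogeneous analogue of (\ref{eq:vander}), and that the resulting matrix is nonsingular for every $|\lambda|\ge1$. This is the linearized shadow of the construction in Sections~\ref{sec:lockedfronts}--\ref{sec:positive}, so those arguments should carry over, but the details need care. The essential-spectrum step is routine once one checks that the framework of \cite{chow95} tolerates the degenerate asymptotic operator $0$ at $-\infty$, which is immediate since $0-\lambda$ is invertible for $\lambda\ne0$. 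Finally, the fact that any admissible weight forces $\bg<\gamma_w$, together with the Rouch\'e count above, retroactively explains the choice of the $q-p$ steepest roots in the construction: including any weaker-decaying root would put a mode on or outside $|z|=\bg$ and destroy either membership in $X_\bg$ or the essential-spectrum bound.
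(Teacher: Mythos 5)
Your essential-spectrum step and the root count are correct and coincide with the paper's: the identity $\log\lambda_{\max}(\gamma)=q\bigl(\tfrac pq-s_{\mathrm{env}}(\gamma)\bigr)\log\gamma$ is in fact a cleaner justification than the paper gives (it only asserts, by analogy with the convexity argument in the speed-$\tfrac12$ example, that any $\bg\in(\gamma_s,\gamma_w)$ stabilizes the essential spectrum), and your Rouch\'e comparison of $(a+bz+az^2)^q$ with $\lambda z^{q-p}$ on $|z|=\bg$ is exactly the extension of Lemma~\ref{lem:relevantroots} that the paper invokes. Likewise, the representation $\eta_i=\sum_j\tilde k_j z_j(\lambda)^i$ for $i\ge1$ can indeed be extracted from the unmodified (constant-coefficient) equations at sites beyond the interface together with decay in $X_\bg$, even though the recursion is altered at the first few sites.

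The genuine gap is the step you flag yourself, and your proposed resolution of it is not the right one. The claim that the linearization imposes the homogeneous analogue of (\ref{eq:vander}) with entries $\zeta_j(\lambda)^n$, $\zeta_j(\lambda)=z_j(\lambda)^{-1/q}$, fails on two counts. First, $z_j(\lambda)^{-1/q}$ has no canonical meaning for $\lambda\neq1$: the fractional powers used in Section~\ref{sec:lockedfronts} are defined through (\ref{eq:gammajrooteqn}), which is tied to the $\lambda=1$ root equation (\ref{eq:mainpoly}). Second, and more substantively, for $\lambda\neq1$ the intermediate-generation perturbation profiles are \emph{not} pure combinations of the modes $z_j(\lambda)^i$: propagating $\sum_j\tilde k_j z_j(\lambda)^i$ one generation creates a localized defect at the first site ahead of the interface (the at-capacity parent contributes $a\cdot0$, and unlike the nonlinear construction there is no matching condition available to absorb it), and this defect is simply carried along to be tested only by the composed eigenvalue equation. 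Consequently the conditions actually imposed are written in integer powers of $z_j(\lambda)$ (through the trinomial coefficients of the terms suppressed at the interface), not in powers of $\zeta_j(\lambda)$. This is precisely why the paper argues directly on $D\F$: since $\eta_i=0$ for $i\le0$, the zero-padded eigenfunction must lie in the span of $(1,\gamma_j(\lambda),\dots,\gamma_j^{2q-1}(\lambda))^T$, $j=1,\dots,q-p$, and reading off the first $q-p$ entries gives the homogeneous system $\sum_j\tilde k_j\,\gamma_j(\lambda)^{-n}=0$, $n=0,\dots,q-p-1$, whose (Vandermonde) nonsingularity forces $\eta\equiv0$. To close your proof, replace the ``homogenized matching conditions'' by this argument; note that even then one must check that the interface equations of $D\F$ differ from the constant-coefficient recursion only in terms carrying $\eta_j$ with $j\le0$ (or else control the extra coefficient modifications coming from ancestry paths starting at $j\ge1$ that dip into the at-capacity region), which is exactly the transition-window bookkeeping you identified as the crux and where the real work lies.
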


\begin{Proof} Recall that the map $\F$ in this case involves $q$ iterations of (\ref{eq:main}), followed by a shift of $p$ lattice sites to the left.  The boundary of the essential spectrum associated  to the unstable state has a point of maximal modulus when $k=0$ and $\lambda = r^q$.  In the weighted space $X_\bg$, this maximal point instead has real part 
\[ \lambda_{max}=\frac{(a+b\bg+a\bg^2)^q}{\bg^{q-p}}. \]
As was the case in the specific example considered above, the essential spectrum is stabilized for any weight $\gamma_s<\bg<\gamma_w$.

We now turn to the eigenvalue problem $D\F\eta=\lambda\eta$.  Assuming once again that the front interface is located at $i=0$, we see that  $\eta_i=0$ for all $i\leq 0$.  For $i>q+1$, we find
\be \lambda \eta_i=\sum_{j=-q}^{q} \alpha_{j+q} \eta_{p+i+j}, \label{eq:etai} \ee
where the $\alpha_j$ are the trinomial  coefficients of the polynomial $(a+b\gamma+a\gamma^2)^q$.  As in (\ref{eq:diff}), this recursion can be written as linear dynamical system in $2q$ dimensions.  There exists a (strong) stable eigenspace of dimension $N=q-p$ for the recursion, corresponding to those decaying solutions with rate greater than or equal to $\gamma_1$.   The equation for $\eta_1$ is 
\[ \lambda \eta_1=\sum_{j=-p}^q \alpha_{j+q}\eta_{p+1+j}, \]
which differs from (\ref{eq:etai}) in that the first $q-p$ terms are absent.  We will therefore seek $\eta_1$ through $\eta_{2q-N}$ such that
\[ \left(\begin{array}{c} 0  \\ \vdots \\ 0 \\ \eta_1 \\ \vdots \\ \eta_{2q-N} \end{array}\right) \in \mathrm{Span}\left\{ \left(\begin{array}{c} 1 \\ \gamma_1(\lambda) \\ \vdots  \\ \gamma_1^{N-1}(\lambda) \\ \vdots \\  \gamma_1^{2q-1} (\lambda) \end{array}\right), \dots \left(\begin{array}{c} 1 \\ \gamma_N(\lambda) \\ \vdots  \\ \gamma_N^{N-1}(\lambda) \\ \vdots \\  \gamma_N^{2q-1} (\lambda) \end{array}\right)\right\}. \]
Inspecting the first $N$ elements, we observe that a (non-trivial) inclusion is impossible, since the $N\times N$ Vandermonde matrix corresponding to the roots $\gamma_j(\lambda)$ has non-zero determinant.  We therefore obtain spectral stability of the linearization in the weighted space $X_\bg$.  

\end{Proof}

\section{Numerical Results}\label{sec:numerics}

In this section, we present numerical simulations of equation (\ref{eq:main}) and compare the observed invasion speeds to those predicted by the analysis of Section~\ref{sec:lockedfronts}.  

Direct numerical simulations of (\ref{eq:main}) were computed for a lattice consisting of $300$ to $400$ lattice sites.   Similar to \cite{wang19}, we use a domain shifting approach so that large number of generations may be simulated.  This approach works as follows: the first three lattice sites are initially set to capacity, while the remaining lattice sites are below capacity and rapidly converge to zero (we typically used zero initial conditions in these sites or some population density that decays faster than any exponential).  The system is then evolved using (\ref{eq:main}) until the fourth lattice site transitions to capacity.  At this point, the solution is then shifted to the left by one, and the site at the far right boundary is set to zero.  Speeds are then computed by calculating the number of shifts that occur and dividing by the total number of generations simulated.  Typically, an initial transient is discarded.  In the simulations presented in Figure~\ref{fig:obsvspredict}, the initial transient is $10,000$ generations, and the speed is calculated over the next $10,000$ generations.

The analysis in Section~\ref{sec:lockedfronts} reveals that the locking regions in parameter space are bounded by three curves.  We will again fix $r>1$ and vary the migration rate $m$ and the critical population density parameter $c$.   The rightmost point in the locking region is a vertical line at $m_*(r)$, where the linear spreading speed is the rational speed $p/q$.  For $m>m_*(r)$, there are no longer $q-p$ distinct roots near zero, and the construction in Section~\ref{sec:lockedfronts} no longer holds.  For $m<m_*(r)$, the boundaries in parameter space are given by the curves $c_{\mathrm{max}}(r,m)$ and $c_{\mathrm{min}}(r,m)$, which are given by formulas (\ref{eq:cmaxgen}) and (\ref{eq:cmingen}).  Numerical computation of these regions are presented in Figure~\ref{fig:tounges} as subsets of $(m,c)$ parameter space for two different choices of $r$.  We also present simulations that compare the observed invasion speed for different $m$ and $c$ values to those predicted by the analysis in Section~\ref{sec:lockedfronts}; see Figure~{\ref{fig:obsvspredict}.

\begin{figure}[!t]
\centering
\includegraphics[width=0.5\textwidth]{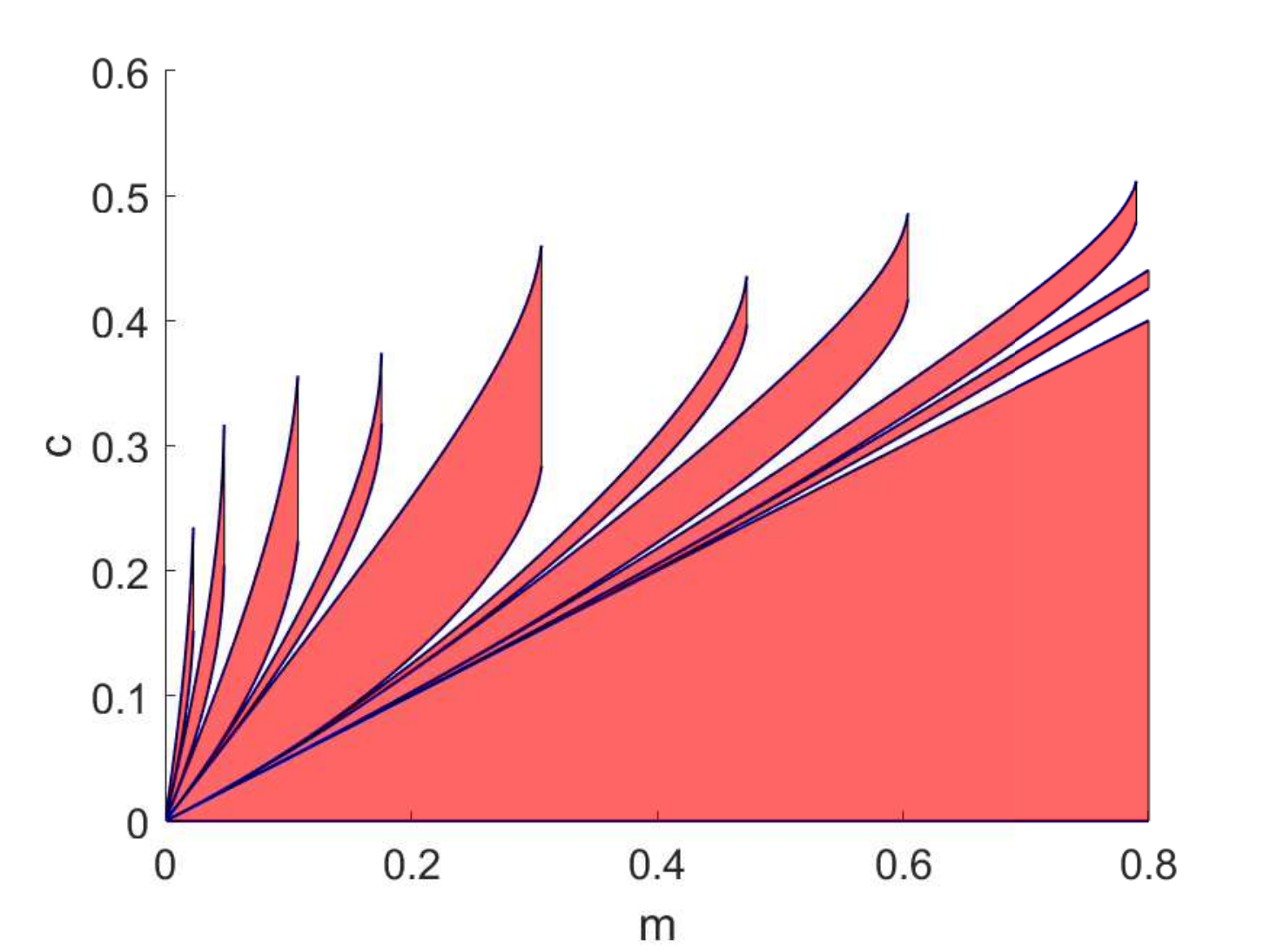}\includegraphics[width=0.5\textwidth]{lockedregionsNEWr1point1.pdf}

\caption{Locking regions (shaded) for all rational speeds $\frac{p}{q}$ with $q\leq 5$ and $1\leq p\leq q$ with $\mathrm{gcd}(p,q)=1$.  On the left is the case of $r=1.5$, while on the right is the case of $r=1.1$.  }
\label{fig:tounges}
\end{figure}

\begin{figure}[!t]
\centering
\includegraphics[width=0.5\textwidth]{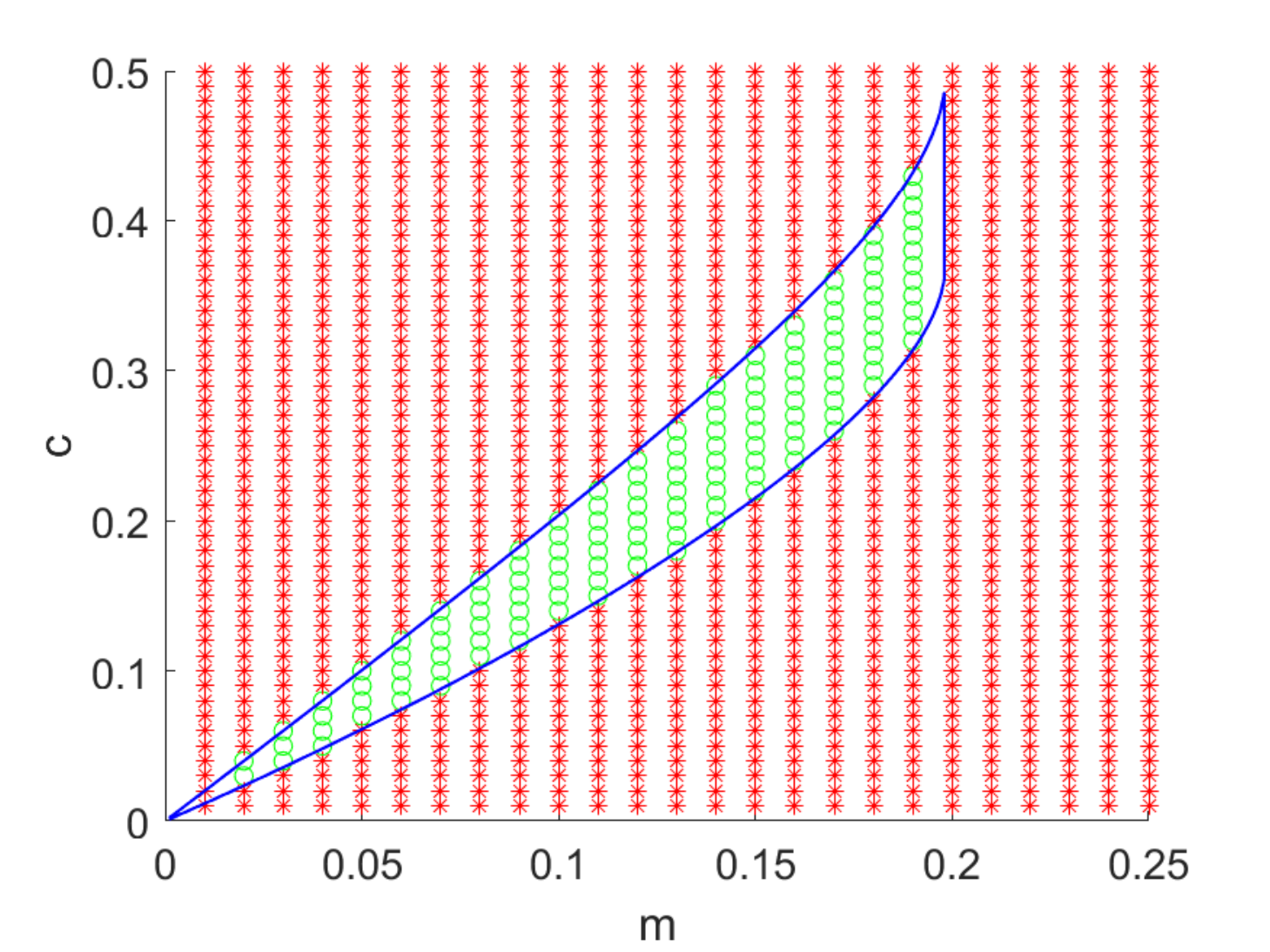}\includegraphics[width=0.5\textwidth]{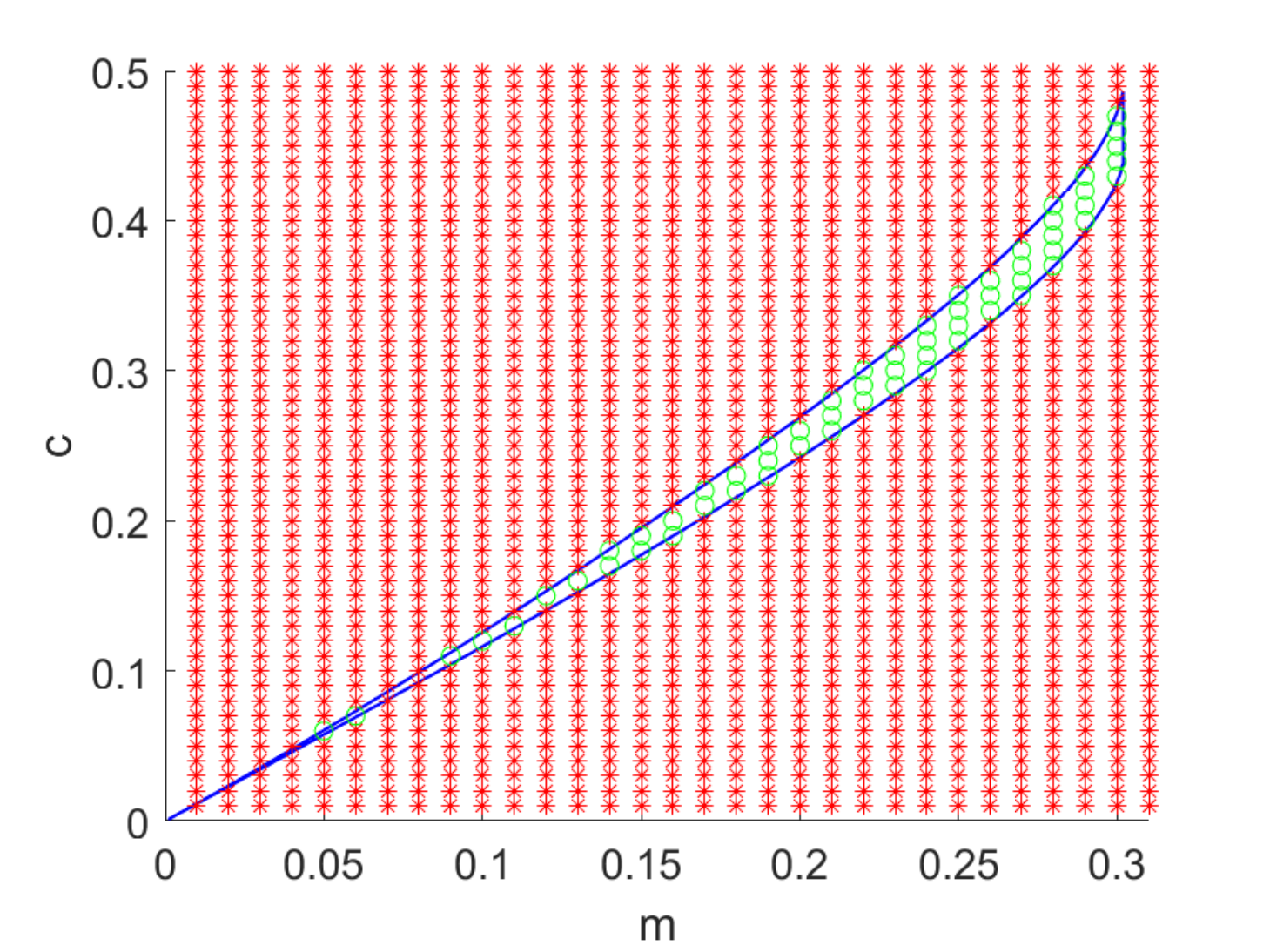}

\caption{Speed one third (left panel) and speed two fifths (right panel) velocity locking regions in $m-c$ parameter space with $r=1.3$.  Red asterisks show parameter values for which the numerically observed speed in direction simulations of (\ref{eq:main}) differs from the locked speed.  Green circles show those parameter values that lead to speed $\frac{1}{3}$ (left) or speed $\frac{2}{5}$ (right).  The blue curves depict the boundary of the locking regions derived from the construction of the traveling front in Section~\ref{sec:lockedfronts}.  }
\label{fig:obsvspredict}
\end{figure}

\section{Discussion}\label{sec:discussion}

The primary contribution of this paper was the construction of locked fronts for (\ref{eq:main}) for the piecewise linear reproduction function $g(u)$ in (\ref{eq:g}) and estimates for the boundary of their existence in parameter space.  We conclude with several directions for future research.

\paragraph{Pulled fronts and fronts with irrational speed.}  
Our construction of locked fronts with rational speeds uses the fact that locked fronts are fixed points of the map consisting of $q$ iterations of (\ref{eq:main}) followed by a shift of $p$ lattice sites.  One can imagine that this construction could be extended to pulled fronts propagating with (rational) linear spreading speeds.  One complication is that the root $\gamma_{\mathrm{lin}}$ is now a double root, so that the construction would involve $q-p+1$ roots $\gamma_j$ (counted with multiplicity).  The resulting solvability condition analogous to (\ref{eq:vander}) would then be underdetermined, and a family of fronts would exist.  The hope is that this flexibility could be utilized to satisfy the population density conditions that ensure that $c_{\mathrm{max}}(r,m)$ can be taken to be $\frac{1}{r}$.  Since this pulled front is a fixed point of a map, one might be tempted to expect locking to occur which is not consistent with observations from direct numerical simulations; see again Figure~\ref{fig:speedplots}.  In fact, we do expect this front to persist as $m$ is varied.  However, based upon our calculations in Section~\ref{sec:stab} and in analogy with the PDE theory, we anticipate a change in stability to occur as the migration rate is varied; see \cite{vansaarloos03} for a review of marginal stability.

Fronts with irrational speed are not fixed points of any map, so their construction would be more challenging still.  In the special case where $rc=1$ and the reproduction function is continuous, we would expect that a comparison principle argument could be used to prove the existence of pulled invasion waves; see for example \cite{weinberger82}.  Extensions to the case $rc<1$ are less clear.  

\paragraph{Scaling of locking regions.} For the locking regions studied here, the largest regions appear to be those with speed $1/q$; see Figure~\ref{fig:ltscaling}.  This is in contrast to the classical case of phase locking of rotation numbers for circle maps, where the largest measure locking regions are the ones corresponding  to  smaller $q$ values.  We also showed that locking regions for speed $s=p/q$ scaled with $\O(m^p)$.  It would be interesting to whether similar scalings hold for more general reproduction functions.  

\begin{figure}[!t]
\centering
\includegraphics[width=0.5\textwidth]{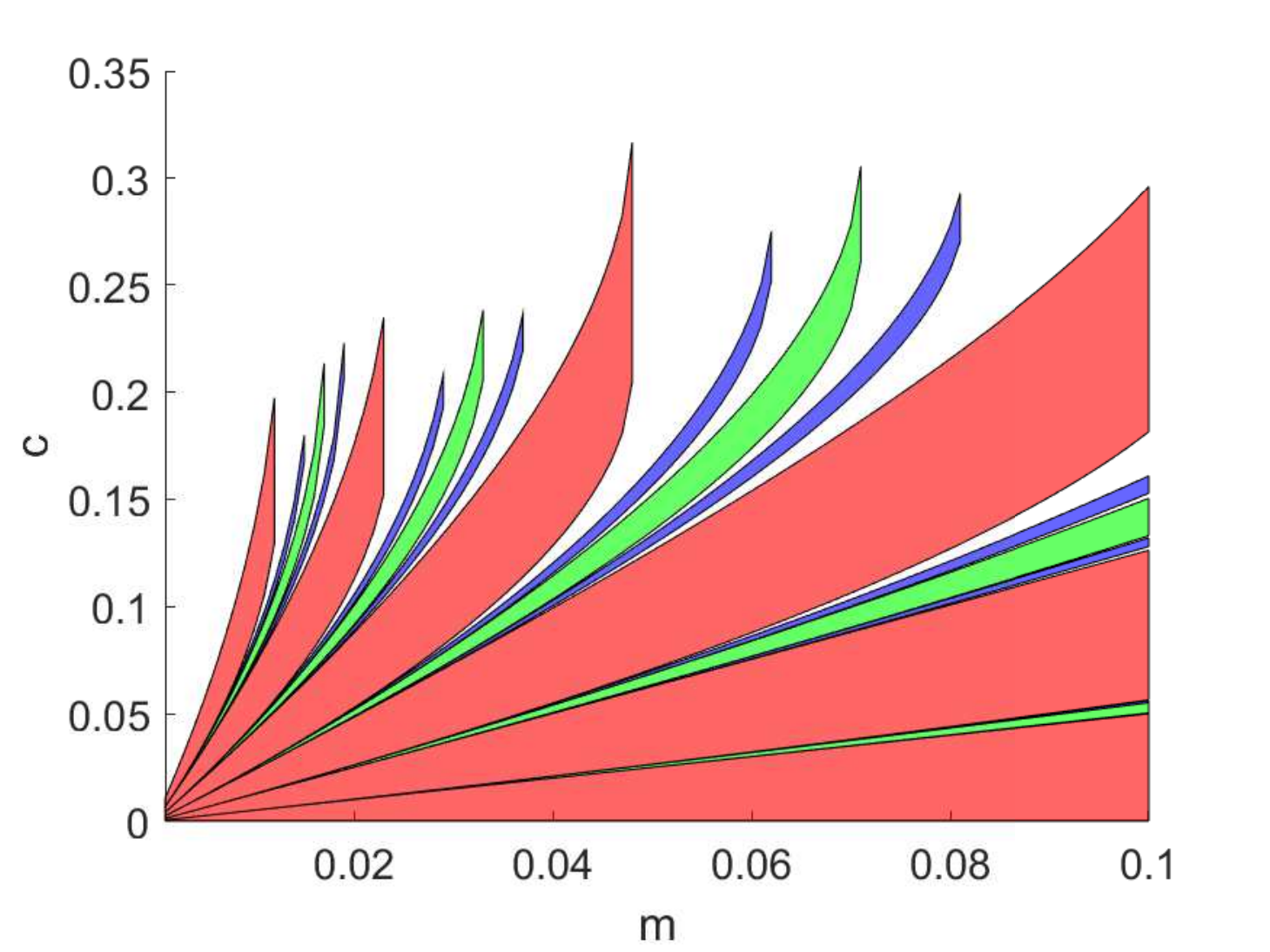}\includegraphics[width=0.5\textwidth]{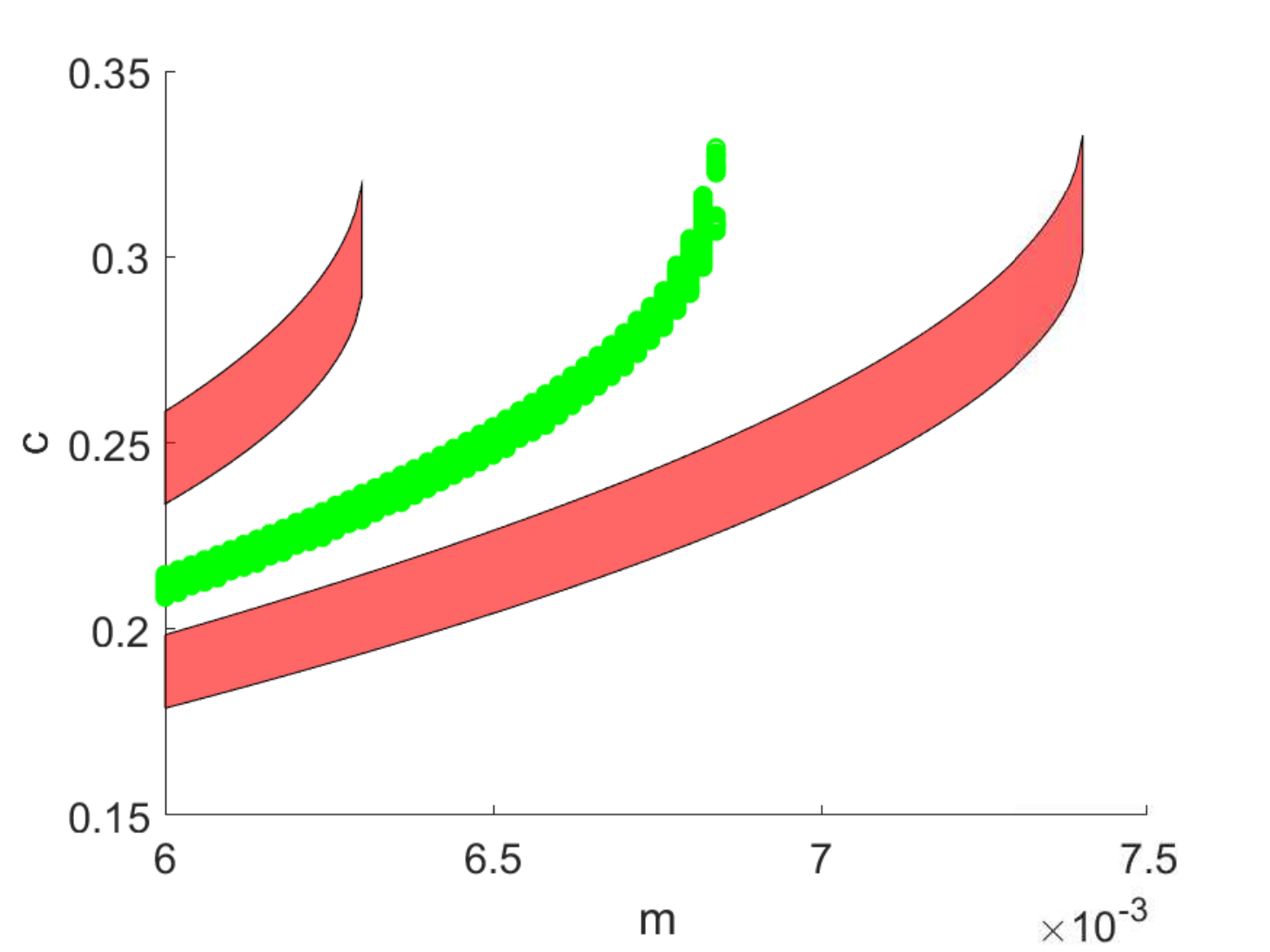}

\caption{On the left are locking regions for various speeds with $r=1.5$.  The red regions are locking regions corresponding to speeds $1/q$ with $q$ from $1$ to $6$.  The green regions are locking regions for speeds $2/q$ with $q$ from $3$ to $11$ with $q$ odd.  The blue regions are locking region for  speeds $3/q$ with $q$ from $4$ to $17$ with $\mathrm{gcd}(3,q)=1$.  On the right is the case of $r=1.1$.  Shown in red are locking regions with speed $1/19$ and $1/20$ calculated using $c_{\mathrm{max}}(r,m)$ and $c_{\mathrm{min}}(r,m)$ from (\ref{eq:cmaxgen}) and (\ref{eq:cmingen}). The green circles represent parameter values for which speed $2/39$ is observed.  At these values, direct numerical simulations of (\ref{eq:main}) are observed to propagate exactly $10,000$ lattice sites in $195,000$ iterations, after a transient of $100,000$ iterations is neglected.    }
\label{fig:ltscaling}
\end{figure}

One question considered in \cite{wang19} concerns the proportion of parameter space taken up by locked fronts, pulled fronts, and pushed (but not locked) fronts.  In \cite{wang19}, such estimates are derived using direct numerical simulations.  We had hoped that our approach could corroborate their findings, but the fact that small $p$ locking regions have relatively large measure makes this problematic.  For example, numerically computing the $s=2/39$ locking region requires obtaining the $37$ smallest roots of a degree $78$ polynomial and then solving (\ref{eq:vander}) to determine the constants $k_j$.   Our numerical routine was unable to determine reliable boundaries in this case using (\ref{eq:cmaxgen})-(\ref{eq:cmingen}).  Determination of the locking region using direct numerical simulation reveals that for some parameters, this locking region has significant size compared to other locking regions with smaller $q$ values; see Figure~\ref{fig:ltscaling}.

\section*{Acknowledgments}  This project was conducted as part of a year-long undergraduate research program hosted by the Mason Experimental Geometry Lab (MEGL).  The research of M.H. was partially supported by the National Science Foundation (DMS-2007759).  The authors thank the anonymous referees for comments that improved the paper.

\bibliographystyle{abbrv}
\bibliography{DTDSbib}

\end{document}